\newtheorem{theorem}{Theorem}[section]
\newtheorem{lemma}[theorem]{Lemma}
\theoremstyle{definition}
\newtheorem{definition}[theorem]{Definition}
\newtheorem{remark}[theorem]{Remark}
\newtheorem{assumption}[theorem]{Assumption}
\numberwithin{equation}{section}
\newcommand{\rmi}{\mathrm{i}}
\newcommand{\rme}{\mathrm{e}}
\newcommand{\R}{\mathbb{R}}
\newcommand{\N}{\mathbb{N}}
\newcommand{\C}{\mathbb{C}}
\title[Unique determination of polyhedra]{On unique determination of polyhedral sets}
\author[L.~Rondi]{Luca Rondi}
\address[L.~Rondi]{Dipartimento di Matematica, Universit\`a degli Studi di Pavia, Italy}
\email{luca.rondi@unipv.it}
\begin{document}

\setcounter{section}{0}
\setcounter{secnumdepth}{2}

\begin{abstract}
In this paper we develop in detail the geometric constructions that lead to many uniqueness results for the determination of polyhedral sets, typically scatterers, by a finite minimal number of measurements. We highlight how unique continuation and a suitable reflection principle are enough to proceed with the constructions, without any other assumption on the underlying partial differential equation or the boundary condition. We also aim to keep the geometric constructions and their proofs as simple as possible.

To illustrate the applicability of this theory,
we show how several uniqueness results present in the literature immediately follow from our arguments. Indeed we believe that this theory may 
serve as a roadmap for establishing similar uniqueness results for other partial differential equations or boundary conditions.

\medskip

\noindent\textbf{AMS 2020 Mathematics Subject Classification:} Primary 35R30; Secondary 35P25.

\medskip

\noindent \textbf{Keywords:} inverse problems, uniqueness, reflection principle, scattering.
\end{abstract}

\maketitle

\section{Introduction}
One of the most classical and important open questions in the inverse scattering problems field is the following: How many (far-field) measurements are needed to uniquely
determine a suitable scatterer? In other words, how many experiments, corresponding to different incident waves, are needed to uniquely determine a suitable scatterer?
Here we limit ourselves to the time-harmonic case and to incident waves of planar type. As a matter of notation, let us point out that a \emph{scatterer} is just a compact set whose complement is connected. A scatterer is called an \emph{obstacle} if it the closure of its interior and \emph{screen} if its interior is empty. A nice introduction to inverse scattering problems, in the acoustic and electromagnetic cases, may be found for instance in \cite{CK}.

Even in the simplest case of acoustic waves with sound-soft obstacles, the answer is not yet complete. By Schiffer's theorem, we know that infinitely many incident waves (with the same frequency) are enough. With a given bound on the diameter $R$ of the region containing the unknown obstacle, finitely many incident waves are enough, a fact first noted in \cite{Col-Sle}; in this case the number of measurements just depends on $R$ and the frequency of the incident waves and it is $1$ provided such a frequency is small enough with respect to $R$. If the scatterer includes screens and obstacles, this result has been extended in \cite{Ron03}, where some further although minimal regularity assumptions on the unknown scatterer are required. However, there is a long standing conjecture that one measurement should suffice, independently of the frequency of the incident wave
or the size of the scatterer, at least in the obstacle case.

In order to approach such a conjecture, and to possibly find the minimal number of measurements needed in other inverse scattering problems, unique determination of scatterers belonging to some special classes has been studied. In this respect, one of the most successful special classes has been so far the one of so-called polyhedral scatterers, that is, scatterers whose boundary is the finite union of cells, a cell being the closure of an open subset of a hyperplane. We note that a polyhedral obstacle is just the finite union of polyhedra.

The first breakthrough in this direction was obtained in \cite{C-Y}, even if some earlier results may be found in the unpublished work \cite{Liu-Nach}. In \cite{C-Y} it was proved uniqueness for polygonal obstacles satisfying a non-trapping condition, with one measurement in the sound-soft case and two measurements in the sound-hard case. For sound-soft scatterers, this result has been extended in \cite{Ale-Ron} on several aspects. In fact, it was proved that one measurement guarantees uniqueness
in any dimension and for general sound-soft polyhedral scatterers, thus removing both the obstacle and the non-trapping assumptions.

After \cite{Ale-Ron},
an enormous amount of research has been done in the field, pursuing extensions to other boundary conditions
or to other kinds of waves, like electromagnetic waves or elastic waves. We just mention here the earliest most significant results.

For general polyhedral scatterers, uniqueness was proved with $N$ measurements, $N$ being the dimension of the space, first in \cite{Liu-Zou} in the sound-hard case and then in \cite{Liu-Zou2} in the mixed sound-soft and sound-hard case. Uniqueness with one measurement for polyhedral obstacles was proved in the sound-hard
case in \cite{Els-Yam1} for $N=2$ and in \cite{Els-Yam2} for any $N\geq 2$. The two-dimensional result has been extended in \cite{Liu-Zou2} to the mixed sound-soft and sound-hard case.

The electromagnetic case has been treated in \cite{Liu3} and \cite{Liu}, whereas the extension to elastic waves has been developed in \cite{Els-Yam2}.

In a different and significant line of research, stemmed from the breakthrough result of \cite{corners},
polyhedral structures, in particular corners and edges, play an important role also in the determination of the support of penetrable
obstacles by a finite number of measurements.

Despite the different equations or boundary conditions, it is clear that all these uniqueness results for the determination of (non-penetrable) polyhedral scatterers
share two crucial common features. Namely, unique continuation properties for the solutions to the equation and a suitable reflection principle depending on the boundary condition. These are combined in the development of suitable geometric constructions. In this paper we highlight that indeed these two conditions, summarised in Assumption~\ref{assum1}, are enough to develop these geometric constructions. Moreover, once Assumption~\ref{assum1} is satisfied, the geometric constructions are completely independent of the other properties of the equation or of the boundary condition. Moreover, our effort is to keep these constructions as simple and as general as possible in such a way to
make completely transparent the geometrical procedure involved
in these uniqueness results. In Section~\ref{sec2}, we state and prove our two main results, Theorem~\ref{thm1} for general polyhedral scatterers and Theorem~\ref{thm2} for polyhedral obstacles. We also aim to keep proofs as simple as possible and we fully succeeded at least for Theorem~\ref{thm1}. Other important features of our results are the following.
\begin{itemize}
\item We can treat at the same time any (finite) number of measurements, simply by changing $M$ in the definition of the differential operator $A$.
\item We can treat at the same time different boundary conditions, that is, we can treat mixed boundary conditions, see Section~\ref{sec2bis}, in particular Theorem~\ref{thm3}.
\end{itemize}

In order to show the power of our general theory, we obtain in a single shot all previously described uniqueness results, possibly with some extensions in the mixed boundary conditions cases. These applications are developed in Section~\ref{sec3}, first for the acoustic and then for the electromagnetic and elastic waves. We thus believe that our general constructions will be useful in order to obtain similar uniqueness results for other kinds of equations or boundary conditions.

Finally, we wish to make some comments on the corresponding stability results. The first stability result has been obtained in \cite{Ron08} for sound-soft general polyhedral scatterers with one measurement. Stability has been extended to sound-hard general polyhedral scatterers with $N$ measurements and to sound-hard polyhedral obstacles with $1$ measurement in \cite{rondi:scatt}. Finally, stability for the electromagnetic case has been proved in \cite{LRX} with $2$ measurements for general polyhedral scatterers and $1$ measurement for polyhedral obstacles. In all these stability results, quantitative versions of Theorem~\ref{thm1} for general scatterers and of Theorem~\ref{thm2} for obstacles are needed. The proof of Theorem~\ref{thm2} is much more difficult than the one of Theorem~\ref{thm1}, likewise its quantitative version is extremely involved, we refer the interested reader to \cite{LRX} for details. Here we limit ourselves to consider quantitative versions of Theorem~\ref{thm1} and to stress some of their common features. In fact, we believe that a general stability result may be obtained independently of the equation and the boundary condition provided the following kinds of properties are satisfied.
\begin{enumerate}[(1)]
\item The differential operator $A$ satisfies interior and boundary regularity estimates.
\item The differential operator $A$ satisfies quantitative unique continuation results, like a three-spheres inequality with optimal exponents.
\item There are uniforms bounds on the solutions to the scattering problem which are independent on the scatterer for scatterers belonging to suitable admissible classes.
\item The boundary condition satisfies a quantitative version of the reflection principle, which often reduces to quantitative estimates on solutions with respect to Cauchy data.
\end{enumerate}
As one can easily check, these are the common essential ingredients of all the stability results for general polyhedral scatterers, that is, are the essential ingredients to obtain a quantitative analogue of Theorem~\ref{thm1}. We finally note that point (3) above might be challenging for the following reasons. First, dealing with polyhedral structures we can not expect to have better than Lipschitz regularity on the scatterers. Moreover, when allowing obstacles and screens or combinations of both, regularity might be even weaker. For the Dirichlet boundary condition in the acoustic case, uniform bounds can be proved in a relatively easy way even for quite general scatterers, see for instance \cite{Ron03}. For the Neumann boundary condition in the acoustic case or for the electromagnetic case, uniform bounds for general scatterers are much harder to obtain. In this respect, the strategy developed in \cite{rondi:ac}, and optimised in \cite{F-R}, for Neumann in the acoustic case might be a good starting point. In fact, the same strategy proved successful in treating the electromagnetic case as well, see \cite{LRX}.

\medskip

\noindent
\textbf{Acknowledgement}\\
The author acknowledges support by GNAMPA, INdAM, through 2023 projects.

\section{The main strategies for uniqueness results}\label{sec2}

The integer $N\geq 2$ denotes the dimension of the space. For any $x\in\R^N$ and any $r>0$, $B_r(x)$ is the open ball with center $x$ and radius $r$. We use also the notation $B_r=B_r(0)$.

For any $(N-1)$-dimensional hyperplane $\Pi$, we call $T_{\Pi}$ the reflection in $\Pi$. Moreover, fixed a unit vector $\nu$ orthogonal to $\Pi$, we call $H^+_{\Pi}$ and 
$H^-_{\Pi}$ the two connected components, actually half-spaces, of $\R^N\backslash \Pi$, with $\nu$ being the exterior unit normal on $\Pi$ for $H^+_{\Pi}$ and the interior unit normal on $\Pi$ for $H^-_{\Pi}$.

We consider the following general framework.
Let us fix positive integers $M$, $L$, $L_1$, $J$ and $J_1$. Let $D\subset \R^N$ be an open set and let $u=(u^1,\ldots,u^M):D\subset\R^N\to \C^M$ be a complex vector-valued function.

Let $A_k$, $k=1,\ldots,L$, be the following differential operators with constant coefficients
$$A_ku=\sum_{j=1}^M\sum_{|\alpha|\leq J}a^j_{\alpha,k}D^{\alpha}u^{j}$$
and let us define
$$Au=(A_1u,\ldots ,A_Lu).$$
As usual, $\alpha$ denotes an $N$-multiindex and $|\alpha|$ is its weight. The constants $a^j_{\alpha,k}$ are complex numbers.

Let $B_k$, $k=1,\ldots,L_1$, be the following boundary operators with constant coefficients
$$B_ku=\sum_{i=1}^N\sum_{j=1}^M\sum_{|\alpha|\leq J_1}\nu_i b^j_{\alpha,i,k}D^{\alpha}u^{j}+\sum_{j=1}^M\sum_{|\alpha|\leq J_1}b^j_{\alpha,0,k}D^{\alpha}u^{j}$$
and let us define
$$Bu=(B_1u,\ldots ,B_{L_1}u).$$
As usual, $\nu$ denotes the unit exterior normal vector on the boundary of $D$, where it is well-defined. The constants $b^j_{\alpha,i,k}$ are complex numbers.

On this general structure we impose the following.
\begin{assumption}\label{assum1}
We assume that the operator $A$ and the boundary condition $B$ satisfy the following.
\begin{enumerate}[(\text{A}.1)]
\item If $u$ satisfies $Au=0$ in an open set $D$, then all partial derivatives $D^{\alpha}u^j$ appearing in the definitions of $A$ and $B$ are continuous in $D$.
\item $A$ satisfies the \emph{unique continuation property}, that is, if $u$ satisfies $Au=0$ in an open connected set $D$ and $u=0$ in some open nonempty $D'\subset D$, then $u=0$ in $D$.
\item For any hyperplane $\Pi$, with unit normal $\nu$, let $D^+\subset H^+_{\Pi}$ be an open set and let $u^+$ satisfy $Au^+=0$ in $D^+$. Then there exists an operator $\tilde{T}_{\Pi,\nu}$ such that
$$u^-=\tilde{T}_{\Pi,\nu}(u^+\circ T_{\Pi})$$
satisfies $Au^-=0$ in $D^-=T_{\Pi}(D^+)$. Furthermore, the following properties are assumed.
\begin{enumerate}
\item Let $x\in \Pi$ be such that $B_r(x)\cap H^+_{\Pi}\subset D^+$. If $Bu^+=0$ on $B_r(x)\cap \Pi$, then $Bu^-=0$ on $B_r(x)\cap \Pi$ as well and, calling
$$u=\left\{\begin{array}{ll}
u^+ \ & \text{in } B_r(x)\cap H^+_{\Pi}\\
u^- \ & \text{in } B_r(x)\cap H^-_{\Pi}
\end{array}
\right.$$
we have that $Au=0$ in $B_r(x)$.
\item On the other hand, if $Au=0$ in $B_r(x)$ and
$$u|_{B_r(x)\cap H^-_{\Pi}}=\tilde{T}_{\Pi,\nu}(u|_{B_r(x)\cap H^+_{\Pi}}\circ T_{\Pi}),$$ 
then $Bu=0$ on $B_r(x)\cap \Pi$ (on either sides of $\Pi$).
\item Let $\Pi_1$ be any hyperplane and let $x\in \Pi_1$ be such that, for some $\nu_1$ orthogonal to $\Pi_1$, $B_r(x)\cap H^+_{\Pi_1} \subset D^+$. If $Bu^+=0$ on $B_r(x)\cap \Pi_1$, then
$Bu^-=0$ on $T_{\Pi}(B_r(x)\cap \Pi_1)$.
\end{enumerate}
\end{enumerate}
\end{assumption}

\begin{remark}\label{ossprel1}
Important consequences of (A.3)(a) are the following. First, the condition $Bu=0$ is independent on the choice of $\nu$, thus excluding Robin type boundary conditions.
Moreover, let $\Pi$ be a hyperplane and let $x\in \Pi$.
If, for some $r>0$,  $Au=0$  in $B_r(x)$ and $Bu=0$ on $B_r(x)\cap\Pi$, then, by the unique continuation of (A.2),
$u|_{B_r(x)\cap H^-_{\Pi}}=\tilde{T}_{\Pi,\nu}(u|_{B_r(x)\cap H^+_{\Pi}}\circ T_{\Pi})$ and
$u|_{B_r(x)\cap H^+_{\Pi}}=\tilde{T}_{\Pi,-\nu}(u|_{B_r(x)\cap H^-_{\Pi}}\circ T_{\Pi})$.
\end{remark}

\begin{remark}\label{ossprel2}
Let $Au=0$ in an open set $D$. Let $x\in D$ and $\Pi$ a hyperplane passing through $x$. If, for some $r>0$, $B_r(x)\subset D$ and $Bu=0$ on $B_r(x)\cap\Pi$, then
$Bu=0$ on $S$, the connected component of $D\cap \Pi$ containing $x$. In fact, let $S_1$ be the largest open subset of $S$ where $Bu=0$. By contradiction, let us assume that $S_1$ is different from $S$, that is, there exists $x_1\in S$ belonging to the boundary of $S_1$. We assume that, for some $r_1>0$, $B_{r_1}(x_1)\subset D$, therefore $B_{r_1}(x_1)\cap \Pi\subset S$. Moreover, there exists $x_2\in S_1$ and $r_2>0$ such that
$B_{r_2}(x_2)\subset B_{r_1}(x_1)$ and $Bu=0$ on $B_{r_2}(x_2)\cap\Pi$.
By the previous remark,
$u|_{B_{r_2}(x_2)\cap H^-_{\Pi}}=\tilde{T}_{\Pi,\nu}(u|_{B_{r_2}(x_2)\cap H^+_{\Pi}}\circ T_{\Pi})$ and, 
again by the unique continuation of (A.2), the same holds on $B_{r_1}(x_1)$. By (A.3)(b), we conclude that $Bu=0$ on $B_{r_1}(x_1)$ and we obtain a contradiction.
\end{remark}

 \begin{definition}
We say that $\Sigma\subset \R^N$ is a \emph{scatterer} if it is bounded, closed and $G=\mathbb{R}^N\backslash\Sigma$ is connected. In turn, $G$ is called an \emph{exterior domain}, that is, a connected open set whose complement is bounded.

We say that a scatterer $\Sigma$ is an \emph{obstacle} if $\Sigma$ coincides with the closure of its interior. 

We call \emph{cell} the closure of an open subset of an $(N-1)$-dimensional hyperplane.

We say that a scatterer $\Sigma$ is \emph{polyhedral} if its boundary is the finite union of cells. Without loss of generality we assume that different cells are internally disjoint.
\end{definition}

\begin{remark} We note that a cell need not to be an $(N-1)$-dimensional polyhedron contained in an $(N-1)$-dimensional hyperplane. Moreover,
its relative interior can have infinitely many connected components.

However, if $\Sigma$ is a polyhedral obstacle, then it is just the union of a finite number of pairwise internally disjoint $N$-dimensional polyhedra.
Hence, we can decompose the boundary of a polyhedral obstacles into a finite number of 
$(N-1)$-dimensional polyhedra contained in an $(N-1)$-dimensional hyperplane. We call these $(N-1)$-dimensional polyhedra the $(N-1)$-\emph{faces} of $\Sigma$ and we denote with $\mathcal{F}_{N-1}$ the set of all $(N-1)$-faces of $\Sigma$.
In turn, the boundary of any $(N-1)$-face of $\Sigma$ is composed by a finite number of 
$(N-2)$-dimensional polyhedra contained in an $(N-2)$-dimensional subspace. We call these $(N-2)$-dimensional polyhedra the $(N-2)$-\emph{faces} of $\Sigma$
and we denote with $\mathcal{F}_{N-2}$ the set of all $(N-2)$-faces of $\Sigma$.
Proceeding iteratively, we can define $k$-\emph{faces} of $\Sigma$ for $0\leq k\leq N-1$ and we denote with $\mathcal{F}_{k}$ the set of all $k$-faces of $\Sigma$.
We usually call \emph{vertices} of $\Sigma$ the $0$-faces of $\Sigma$. An important property to recall is that if $P$ is a vertex of $\Sigma$, then the span of the normals to all $(N-1)$-faces of $\Sigma$ containing $P$ is the whole $\R^N$.
 \end{remark}

Let $\Sigma$ be a polyhedral scatterer. We assume that $u$ is a solution to $Au=0$ in $G=\R^N\backslash\Sigma$ and that $Bu=0$ on flat portions of boundary. That is, for any $x\in \partial \Sigma$, if $x$ belongs to the interior of a cell $C$ contained in a hyperplane $\Pi$ and, for some $\nu$ orthogonal to $\Pi$ and some $r>0$, we have $B_r(x)\cap H^+_{\Pi}\subset G$, then $Bu=0$ on $B_r(x)\cap \Pi$. Note that if also $B_r(x)\cap H^-_{\Pi}\subset G$, then $Bu=0$ on either sides of $\Pi$.
We can also state this condition as follows. For any cell $C$, contained in a hyperplane $\Pi$, and any $\nu$ orthogonal to $\Pi$ such that for any 
$x$ belonging to the interior of $C$ there exists $r>0$ with $B_r(x)\cap H^+_{\Pi}\subset G$, we have $Bu=0$ in the interior of the cell $C$.

Following \cite{Ale-Ron}, we consider the following.
 
 \begin{definition}\label{flatdef}
 A point $x\in G$ is a \emph{flat point} for $u$ if there exists $\Pi$ passing through $x$ such that $Bu=0$ on $\Pi\cap B_r(x)$ for some $r>0$ with $B_r(x)\subset G$.
 
 For any flat point $x\in G$, 
 let $S$ be the connected component of $G\cap \Pi$ containing $x$. By Remark~\ref{ossprel2}, we have that $Bu=0$ on $S$. Fixing $\nu$ orthogonal to $\Pi$, we call $G^+$ the connected component of $G\backslash S$ containing $B_r(x)\cap H^+_{\Pi}$ and
 $G^-$ the connected component of $G\backslash S$ containing $B_r(x)\cap H^-_{\Pi}$. We note that it may happen that $G^+=G^-$.
 We call $E^+$ the connected component of $G^+\cap T_{\Pi}(G^-)$ containing $B_r(x)\cap H^+_{\Pi}$
 and $E^-$ the connected component of $G^-\cap T_{\Pi}(G^+)$ containing $B_r(x)\cap H^-_{\Pi}$. We note that $E^-=T_{\Pi}(E^+)$ and that
 $u|_{E^-}=\tilde{T}_{\Pi,\nu}(u|_{E^+}\circ T_{\Pi})$.
 Let $E=E(x)$ be the set $E=E^+\cup E^-\cup S$.
We note that $E$ is a connected set contained in $G$ which is symmetric with respect to $\Pi$. Moreover, the boundary of $E$ is bounded and contained in $\partial\Sigma\cup T_{\Pi}(\partial\Sigma)$ and any point $y\in \partial E\backslash\Sigma$ is a flat point.

Finally, any hyperplane $\Pi$ such that $Bu=0$ on $\Pi\backslash \overline{B_R}$ for some $R>0$ will be called a \emph{reflection hyperplane}. 

\end{definition}
 
 \begin{remark}\label{remark1} Let $x\in G$ be a flat point, with respect to the hyperplane $\Pi$, and let $E=E(x)$. Then there are two cases: either $E$ is unbounded or $E$ is bounded. If $E$ is unbounded, then there exists $R>0$ such that
$\Pi\backslash \overline{B_R}$ is contained in $E$, hence, by (A.3)(b), $Bu=0$ on $\Pi\backslash \overline{B_R}$. Consequently, if $E$ is unbounded, then $\Pi$ is a reflection hyperplane.

Moreover, if $\Sigma\subset B_{R_0}$, for some $R_0>0$, and $\|x\|>R_0+1$, then $E$ is unbounded, thus $\Pi$ is a reflection hyperplane.
\end{remark}
 
\begin{remark}\label{remark2}
Another important observation is that the reflection in a reflection hyperplane of a reflection hyperplane is still a reflection hyperplane.

In particular, let $\kappa$ be any $(N-2)$-dimensional subspace.
We consider the set of all reflection hyperplanes containing $\kappa$, assuming such a set is not empty. There are two cases.
If the reflection hyperplanes containing $\kappa$ are finite, let us say $m\geq 1$, then they subdivide the whole space into $2m$ equal sectors.
If they are infinite, then, by a simple continuity argument due to the regularity of (A.1), any hyperplane containing $\kappa$ is a reflection hyperplane.
\end{remark}

The key result is the following.
 
 \begin{theorem}\label{thm1}
Let us assume that $u$ admits a flat point $x\in G$. Then there exists a reflection hyperplane $\Pi_1$.
 \end{theorem}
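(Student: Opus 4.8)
The plan is to run a dichotomy on the set $E=E(x)$ attached to the flat point $x$ and its hyperplane $\Pi$, and to reduce the whole statement to producing a flat point far away from the origin. If $E(x)$ is unbounded, Remark~\ref{remark1} tells us at once that $\Pi$ is a reflection hyperplane, so there is nothing to prove. Fixing $R_0$ with $\Sigma\subset B_{R_0}$, the same remark shows that it even suffices to find \emph{some} flat point $y$ with $\|y\|>R_0+1$: for such a $y$ the associated set $E(y)$ is unbounded and its hyperplane is a reflection hyperplane. Hence I may assume $x\in B_{R_0+1}$ and $E(x)$ bounded, and the real task is to escape to infinity.

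To escape, I would use the structure of $\partial E$ recorded in Definition~\ref{flatdef}: $E$ is bounded, symmetric about $\Pi$, its boundary lies in $\partial\Sigma\cup T_{\Pi}(\partial\Sigma)$, and every point of $\partial E\setminus\Sigma$ is again a flat point. Because $E$ is bounded, $\partial E$ must contain flat portions carried by the reflected scatterer $T_{\Pi}(\partial\Sigma)$, each lying on a hyperplane $T_{\Pi}(\Pi')$ with $\Pi'$ one of the finitely many hyperplanes containing a cell of $\partial\Sigma$. Selecting a flat point $x_1\in\partial E\setminus\Sigma$ on such a portion yields a new flat point whose hyperplane $\Pi_1$ is the reflection in $\Pi$ of a face hyperplane, and whose set $E(x_1)$ reaches outside $E$, since reflecting across the boundary face of $E$ through $x_1$ sends interior points of $E$ to its exterior. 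Iterating this selection produces a sequence of flat points $x_n$ on hyperplanes $\Pi_n$, together with ever more reflected copies of the finitely many faces of $\Sigma$.

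The decisive and, I expect, hardest step is to show that this iteration must end by delivering a flat point outside $B_{R_0+1}$. Either some $x_n$ satisfies $\|x_n\|>R_0+1$, and Remark~\ref{remark1} closes the argument; or all $x_n$, and all the reflected faces they generate, stay inside $B_{R_0+1}$. In the second situation the finiteness of the set of cells of $\Sigma$ forces infinitely many \emph{distinct} reflected faces to accumulate in a fixed ball, so that the directions of the corresponding hyperplanes accumulate and, in the limit, share an $(N-2)$-dimensional subspace $\kappa$. At this point I would run a continuity argument resting on the regularity in (A.1), in complete analogy with Remark~\ref{remark2}: the relation $Bu=0$ would then persist on flat portions lying on a dense set of hyperplanes through $\kappa$, hence on every such hyperplane. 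One of these hyperplanes meets $\R^N\setminus\overline{B_{R_0}}$ along a flat portion on which $Bu=0$, and by (A.3)(b) it is a reflection hyperplane. Making this accumulation step precise---checking that the generated faces are genuinely distinct and that their limit produces a full pencil of flat portions rather than collapsing---is the crux; the dichotomy of the first two paragraphs and the reflection bookkeeping around it are comparatively routine.
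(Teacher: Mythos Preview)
Your opening reduction is exactly right and matches the paper: the dichotomy on $E(x)$, the appeal to Remark~\ref{remark1}, and the observation that it suffices to produce a flat point outside $B_{R_0+1}$. The divergence comes in how you escape to infinity, and here your proposal has a genuine gap.

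The accumulation step you flag as ``the crux'' is not merely incomplete; as written it does not work. Accumulation of distinct reflected faces inside $B_{R_0+1}$ gives hyperplanes whose normals converge, which produces at most \emph{one} limiting hyperplane, not a pencil through a common $(N-2)$-subspace $\kappa$ (indeed the hyperplanes could be parallel, or their pairwise intersections could run off to infinity). Even granting a common $\kappa$, the flat portions you generate are local pieces sitting inside $B_{R_0+1}$; Remark~\ref{remark2} concerns genuine \emph{reflection} hyperplanes, where $Bu=0$ already holds at infinity, and its mechanism does not transfer to local flat portions. To promote $Bu=0$ from a portion near $\kappa$ to $\Pi\setminus\overline{B_R}$ you would need the connected component of $G\cap\Pi$ containing that portion to reach infinity, which is precisely what is in doubt. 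There is also no guarantee that your iteration produces infinitely many \emph{distinct} hyperplanes rather than cycling among finitely many.

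The paper sidesteps all of this with a device that makes the proof four lines long. Fix once and for all a piecewise smooth curve $\gamma:[0,+\infty)\to G$ with $\gamma(0)=x$ and $\|\gamma(t)\|\to\infty$. The set $A=\{t:\gamma(t)\text{ is a flat point}\}$ is closed, by the regularity of (A.1). If $A$ is unbounded you are done by Remark~\ref{remark1}. If $A$ is bounded, let $\tilde t=\max A$ and look at $E(\gamma(\tilde t))$: if it is unbounded you are done, and if it is bounded the curve must exit it at some $\tilde t_1>\tilde t$ through a point of $\partial E\setminus\Sigma$ (since $\gamma\subset G$), which is a flat point by Definition~\ref{flatdef}, contradicting the maximality of $\tilde t$. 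The single curve replaces your entire iteration and eliminates any need for an accumulation analysis.
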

 
 \begin{proof}
 Let $\gamma:[0,+\infty)\to \R^N$ be a piecewise smooth curve in $G$ with first endpoint $\gamma(0)=x$ and such that $\|\gamma(t)\|\to+\infty$ as $t\to+\infty$.
 
 First of all, we claim that the set of flat points belonging to $\gamma$ is closed. Analogously, the set of $t\in [0,+\infty)$ such that $x=\gamma(t)$ is a flat point is closed.
  In fact, let $x_n=\gamma(t_n)$, $n\in\N$, be a sequence of flat points converging to $x=\gamma(t)$ as $n\to +\infty$. We need to show that $x$ is a flat point as well. Let $\Pi_n$ be the hyperplane related to $x_n$ and $\nu_n$ one of its unit normal. Up to a subsequence, we can assume that $\nu_n\to \nu$ as $n\to+\infty$ and we call $\Pi$ the hyperplane orthogonal to $\nu$ passing through $x$. A simple continuity argument due to the regularity of (A.1) allows us to prove the claim.
 
 If the set $A=\{t\in[0,+\infty):\ \gamma(t)\text{ is a flat point}\}$ is unbounded, then the proof is concluded by the last part of Remark~\ref{remark1}.
 
 Assume that $A$ is bounded and let $\tilde{t}=\max A$ and $\tilde{x}=\gamma(\tilde{t})$. If the corresponding set $E=E(\tilde{x})$ is unbounded, the proof is concluded.
 It remains the case in which $E$ is bounded. Then there exists $\tilde{t}_1>\tilde{t}$ such that $\tilde{x}_1=\gamma(\tilde{t}_1)$ belongs to $\partial E\cap G=\partial E\backslash\Sigma$. Hence $\tilde{x}_1$ is a flat point and we obtain a contradiction. 
 Thus the proof is concluded.
\end{proof}
 
 When we are dealing with polyhedral obstacles, we can prove something more.
 
 \begin{theorem}\label{thm2}
Let $\Sigma$ be a polyhedral obstacle. Let $\Sigma'$ be a finite union of polyhedra with $\Sigma'\backslash\Sigma\neq\emptyset$.
Let $\tilde{G}$ be the unbounded connected component of $\R^N\backslash(\Sigma\cup\Sigma')$.
Assume that any $x\in\partial \tilde{G}\cap G$ is a flat point.
 
 Then there exist $N$ reflection hyperplanes $\Pi_1,\ldots,\Pi_N$, whose corresponding normals $\nu_1,\ldots,\nu_N$ are linearly independent.
 \end{theorem}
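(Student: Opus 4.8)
\section*{Proof proposal}

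The plan is to produce the $N$ hyperplanes from a single genuine corner of the enlarged solid. Write $Q=\Sigma\cup\Sigma'$, a finite union of polyhedra, and $R=\R^N\setminus\tilde{G}$, so that $R$ is compact and polyhedral with $\partial R=\partial\tilde{G}$. First I would check that the exposed boundary $\partial\tilde{G}\cap G$ is nonempty. Since $\Sigma'\setminus\Sigma\neq\emptyset$ and $\Sigma'\setminus\Sigma\subset G$ we have $\tilde{G}\subsetneq G$; and if $\partial\tilde{G}$ were contained in $\Sigma$, then $\tilde{G}$ would agree with $G$ near its boundary, hence be a union of connected components of $G$, forcing $\tilde{G}=G$ because $G$ is connected and $\tilde{G}$ is unbounded, a contradiction. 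By hypothesis every point of $\partial\tilde{G}\cap G$ is a flat point, and by Remark~\ref{ossprel2} the condition $Bu=0$ propagates along the connected component of $G\cap\Pi$ through each such point.

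The key mechanism is the following. Let $F\subset\partial\tilde{G}\cap G$ be (the relative interior of) an exposed face, contained in a hyperplane $\Pi$ with normal $\nu$, and let $x\in F$. Then $x$ is a flat point with respect to $\Pi$, and $Bu=0$ on the connected component $S$ of $\Pi\setminus\Sigma=G\cap\Pi$ containing $x$. If $S$ is unbounded it contains $\Pi\setminus\overline{B_R}$ for some $R>0$, so by (A.3)(b) we get $Bu=0$ on $\Pi\setminus\overline{B_R}$ and $\Pi$ is a reflection hyperplane; this is exactly the case in which $E(x)$ is unbounded in Remark~\ref{remark1}. Thus the task reduces to exhibiting $N$ exposed faces, with linearly independent normals, whose associated sections $S$ are unbounded. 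Independence is where the obstacle hypothesis enters: at any vertex $P$ of a polyhedral solid the normals to the $(N-1)$-faces meeting at $P$ span $\R^N$, so it would suffice to find one vertex $P\in G$ of $R$ all of whose incident faces are exposed and have unbounded sections.

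To locate such a vertex I would run an extremal argument: for a generic direction $e$ let $P$ maximise $\langle\,\cdot\,,e\rangle$ over $R$. For generic $e$ this $P$ is a single vertex of $R$, and near $P$ the solid $R$ lies in the half-space $\{\langle y,e\rangle\le\langle P,e\rangle\}$ while $\tilde{G}$ fills the complementary cone, so each face of $\partial R$ at $P$ is exposed and its section reaches into the unbounded part of $G$. The delicate point is to guarantee $P\in G$, i.e.\ $P\notin\Sigma$: this holds as soon as $e$ can be chosen with $\max_{\Sigma'}\langle\,\cdot\,,e\rangle>\max_{\Sigma}\langle\,\cdot\,,e\rangle$, but it may fail when $\Sigma'$ merely fills a re-entrant region of $\Sigma$ without protruding beyond its convex hull.

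This last possibility is the crux, and the reason Theorem~\ref{thm2} is much harder than Theorem~\ref{thm1}. When the original exposed faces do not already furnish $N$ independent normals with unbounded sections (for instance when $\partial\tilde{G}\cap G$ reduces to a single flat face, whose normals span only a line), I would generate the missing directions by reflection: by Remark~\ref{remark2} the reflection of a reflection hyperplane is again a reflection hyperplane, so once one reflection hyperplane $\Pi$ is produced I reflect the configuration $\Sigma\cup\Sigma'$ across $\Pi$ and examine the exposed boundary of the unfolded solid, which presents faces with new normal directions. Using that $\Sigma$ is an obstacle, hence the union of finitely many full-dimensional polyhedra, one aims to show this unfolding eventually yields a corner whose incident reflection hyperplanes have normals spanning $\R^N$. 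The hardest sub-case, requiring the most care, is $N=2$ together with the situation in which the section $S$ is bounded, so that $E(x)$ is bounded and the one-step argument above fails; there one must combine the curve-to-infinity argument of Theorem~\ref{thm1} with the reflection unfolding, iterating the construction of new flat points on $\partial E\setminus\Sigma$ until an unbounded symmetric set, and hence a reflection hyperplane, is reached in each of $N$ independent directions.
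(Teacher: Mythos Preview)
Your extremal-vertex argument is sound when it applies: if some direction $e$ makes the maximiser $P$ of $\langle\cdot,e\rangle$ over $R$ a vertex lying in $G$, then each $(N-1)$-face of $R$ at $P$ sits in a hyperplane $\Pi$ whose section $S$ is unbounded (it contains the half-hyperplane $\Pi\cap\{\langle y,e\rangle>\langle P,e\rangle\}\subset\tilde G$), so by Remark~\ref{remark1} you obtain $N$ reflection hyperplanes with independent normals. But you have correctly spotted that this fails outright when $\Sigma'$ merely fills a re-entrant region of $\Sigma$, and your fallback is where the real gap lies. The ``reflect and unfold'' heuristic is not an argument: reflecting a face with normal $\nu$ across a hyperplane with normal $\mu$ yields a face whose normal lies in $\mathrm{span}\{\nu,\mu\}$, so the span of available normals can only grow if you already possess a reflection hyperplane whose normal is new---precisely what you are trying to produce. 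You give no termination criterion for the iteration, no reason the process cannot cycle among finitely many directions, and no explanation of why the reflected faces land in $G$ so as to remain flat points for $u$. The closing sentence (``combine the curve-to-infinity argument with the reflection unfolding'') is a description of a hope, not a proof.

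The paper's route is entirely different and proceeds by contradiction. Assuming the normals to all reflection hyperplanes span only a subspace of dimension $N_0<N$, it first establishes global rigidity: there are only finitely many reflection hyperplanes (Lemma~\ref{lemma21}); they all contain a common $(N-N_0)$-dimensional affine subspace and generate a finite group (Lemma~\ref{lemmacommonplane}); and consequently $\Sigma$ itself is symmetric under every reflection hyperplane (Lemma~\ref{symmetriclemma}). These structural facts make possible Lemma~\ref{pathlemma}: from any point of $G$ lying off all reflection hyperplanes one can run a curve to infinity that stays in $G$ and never meets any reflection hyperplane. Starting such a curve at an interior point of $\Sigma'\setminus\Sigma$, it must exit through $\partial\tilde G\cap G$, giving a flat point on the curve; the mechanism of Theorem~\ref{thm1} then forces some point of the curve to be a flat point lying \emph{on} a reflection hyperplane, contradicting the choice of curve. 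The finiteness of the reflection group and the symmetry of $\Sigma$ are exactly the global control your unfolding picture lacks, and I do not see how to complete your approach without proving something equivalent to them.
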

 
 In order to prove this result we need several preliminary facts and lemmas. First of all we consider some easy cases.
 
 If $\Pi$ is a reflection hyperplane such that
 $\Sigma\subset H^+_{\Pi}$ for some $\nu$ orthogonal to $\Pi$, then the result is proved by the following argument. 
 Since $\Sigma$ is an obstacle, we can find $(N-1)$-faces of $\Sigma$, $C_1,\ldots,C_N$, belonging to hyperplanes $\tilde{\Pi}_1,\ldots,\tilde{\Pi}_N$ with normals $\tilde{\nu}_1,\ldots,\tilde{\nu}_N$ respectively, such that $\tilde{\nu}_1,\ldots,\tilde{\nu}_N$ are linearly independent.
By reflecting in $\Pi$, it is easy to see that the hyperplanes $\Pi_i=T_{\Pi}(\tilde{\Pi}_i)$, $i=1,\ldots,N$, satisfy the thesis.

If there exist two different reflection hyperplanes with the same normal $\nu$, by iterative reflections we can find infinitely many equispaced reflection hyperplanes with the same normal. Eventually, for one of these, let us call it $\Pi$, we have $\Sigma\subset H^+_{\Pi}$, and the proof is concluded.

Let $R_0>0$ be such that $\Sigma$ is contained in $B_{R_0}$.
Let $\kappa$ be an $(N-2)$-dimensional subspace whose distance from the origin is greater than or equal to $2R_0+1$. Let $Q\in\kappa$ be the projection of the origin on
$\kappa$. Let $\nu_0=Q/\|Q\|$. Then there exists a constant $\alpha_0$, $0<\alpha_0<1$, depending on $R_0$ only, such that if $\Pi$ is a reflection hyperplane containing $\kappa$ and with normal $\nu$ such that
$\nu\cdot\nu_0\geq \alpha_0$, then $\overline{B_{R_0}}$, hence $\Sigma$, is contained in $H^+_{\Pi}$ and the proof is concluded. In particular, assume
we have
two different reflection hyperplanes $\Pi_1$ and $\Pi_2$ whose intersection is $\kappa$ and whose normals $\nu_1$ and $\nu_2$ satisfy $\alpha_0\leq \nu_1\cdot\nu_2<1$.
We have that $\nu_0$ belongs to the plane spanned by $\nu_1$ and $\nu_2$. By Remark~\ref{remark2}, we can find a reflection hyperplane $\Pi$ containing $\kappa$ such that its normal $\nu$ satisfies $\nu\cdot\nu_0\geq \alpha_0$, and the proof is concluded.

From now on, let $N_0$, $1\leq N_0\leq N$, be the dimension of the span of the normals to reflection hyperplanes.

\begin{lemma}\label{lemma21}
If $N_0<N$, then the number of reflection hyperplanes is finite.
\end{lemma}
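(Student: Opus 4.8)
The plan is to argue by contradiction: assume $N_0<N$ but that there are infinitely many reflection hyperplanes, and derive that $N_0=N$. First I would record the reductions already available under $N_0<N$, obtained from the three cases discussed before the lemma. Since a reflection hyperplane with $\Sigma$ on one side would immediately give $N$ independent normals, no reflection hyperplane can leave $\Sigma$ on a single side; hence, by the last part of Remark~\ref{remark1}, every reflection hyperplane must meet $\overline{B_{R_0}}$. Likewise, two parallel reflection hyperplanes would, by iterated reflection, produce one with $\Sigma$ on one side, so no two reflection hyperplanes are parallel. Thus each reflection hyperplane is encoded by a pair $(\nu,c)\in S^{N-1}\times[-R_0,R_0]$ consisting of its normal direction and signed offset, with pairwise distinct normals.

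Next I would extract an accumulation. By compactness of $S^{N-1}\times[-R_0,R_0]$, infinitely many reflection hyperplanes yield a sequence of distinct ones $\Pi_n$ converging to a hyperplane $\Pi$, with $\nu_n\to\nu$ and $c_n\to c$; the continuity argument based on (A.1), exactly as in the proof of Theorem~\ref{thm1} and in Remark~\ref{remark2}, shows that $\Pi$ is again a reflection hyperplane. Since the $\Pi_n$ are distinct from $\Pi$ with normals tending to that of $\Pi$, the dihedral angle between $\Pi_n$ and $\Pi$ tends to $0$. Reflecting $\Pi_n$ in $\Pi$ and iterating (Remark~\ref{remark2}) produces reflection hyperplanes through $\Pi_n\cap\Pi$ at all integer multiples of that angle; letting $n\to\infty$ and invoking the (A.1)-continuity of Remark~\ref{remark2} once more, I expect to reach the key intermediate statement that \emph{every} hyperplane through some fixed $(N-2)$-subspace $\kappa$ is a reflection hyperplane. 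Moreover $\kappa$ must satisfy $\mathrm{dist}(\kappa,0)<2R_0+1$: two members of this pencil with nearly equal normals meet exactly in $\kappa$, so if $\kappa$ were at distance $\ge 2R_0+1$ the $\alpha_0$-reduction preceding the lemma would already force $N_0=N$.

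It then remains to contradict $N_0<N$ starting from the configuration in which all hyperplanes through a bounded $(N-2)$-subspace $\kappa$ are reflection hyperplanes. If there exists a reflection hyperplane $\Pi^\flat$ not containing $\kappa$, I would reflect the whole pencil in $\Pi^\flat$, obtaining (again by Remark~\ref{remark2}) a second pencil through $\kappa'=T_{\Pi^\flat}(\kappa)\neq\kappa$; choosing a member of each pencil with almost equal normals, their intersection is driven far from the origin while $\nu_1\cdot\nu_2\ge\alpha_0$, so the $\alpha_0$-reduction applies and yields the contradiction. This leaves the irreducible situation in which \emph{all} reflection hyperplanes belong to the single pencil through $\kappa$; then the normals span only the $2$-plane orthogonal to the direction of $\kappa$, so $N_0=2$ and hence $N\ge 3$.

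The decisive step — and the one I expect to be the main obstacle — is to eliminate this single-pencil configuration using the obstacle structure. First, $\kappa$ must pierce the interior of the convex hull of $\Sigma$, for otherwise a pencil member would be a supporting hyperplane leaving $\Sigma$ on one side, contradicting the no-separation reduction. Since $\Sigma$ is a polyhedral obstacle, the normals of its $(N-1)$-faces span $\R^N$, so some face-normal lies off the $2$-plane of the pencil. Choosing a vertex $P$ of $\Sigma$ and the pencil member $\Pi_P$ through $P$, reflection in $\Pi_P$ fixes $P$ and carries the faces of $\Sigma$ at $P$ to faces whose normals are the isometric images $T_{\Pi_P}$ of the original ones, which therefore again span $\R^N$. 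The hard part is to show that these reflected faces are genuine flat portions of boundary for the reflected solution, hence give rise to new reflection hyperplanes: this is exactly where the reflection principle (A.3) and the hypothesis that every point of $\partial\tilde G\cap G$ is a flat point must be used, and where the fact that $\Sigma$ is not on one side of $\Pi_P$ makes the bookkeeping delicate. Granting this, the reflected face-hyperplanes provide $N$ reflection hyperplanes with linearly independent normals, so $N_0=N$, contradicting the hypothesis; consequently only finitely many reflection hyperplanes can exist.
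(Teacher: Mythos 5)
Your opening reductions (no reflection hyperplane leaves $\Sigma$ on one side, no two are parallel, intersections of nearly-parallel pairs must lie within distance $2R_0+1$ of the origin by the $\alpha_0$-criterion) match the paper's, but from there the argument develops a genuine gap that you yourself flag and then ``grant'': the claim that the faces of $\Sigma$ meeting a vertex $P$, once reflected in a reflection hyperplane, become flat portions carrying the boundary condition for $u$ and hence produce $N$ independent reflection hyperplanes. As you set it up this step cannot work: you reflect in the pencil member $\Pi_P$ \emph{through} $P$, so $P$ is fixed and the reflected faces are pieces of hyperplanes emanating from $P$ itself, which in general land inside $\Sigma$, where $u$ is not defined and where (A.3)(c) gives no information about $u$; the reflection principle only transfers the condition $Bu=0$ to reflected cells on which the reflected solution agrees with $u$, i.e.\ when they sit inside $G$. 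The paper's proof is built precisely to avoid this: it takes two reflection hyperplanes whose normals make a very small angle and which meet in $\kappa_{n,m}$ near the origin, introduces $\delta$ (the minimal distance between disjoint faces of $\Sigma$) and the extremal radius $r_0=\min\{r:\ \Sigma\subset\kappa_r\}$, picks a \emph{vertex} $P$ with $s(P)=r_0$, and reflects in a hyperplane $\Pi$ through $\kappa_{n,m}$ chosen so that $P'=T_{\Pi}(P)$ satisfies $0<\|P'-P\|\le\delta/2$ while $s(P')=r_0$; extremality of $r_0$ together with the definition of $\delta$ then forces $P'\notin\Sigma$, and only then do the reflected face-hyperplanes at $P'$ yield $N$ independent reflection hyperplanes. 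This device --- displacing the extremal vertex by a small nonzero amount without leaving the extremal cylinder --- is the missing idea, and nothing in your proposal substitutes for it.

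A secondary problem is that your intermediate reduction to ``all reflection hyperplanes form a single pencil through a fixed $(N-2)$-subspace $\kappa$'' is not forced. In the two-pencil step you assert that one can pick one member of each pencil with almost equal normals whose intersection is driven far from the origin; but if the axes $\kappa$ and $\kappa'$ are non-parallel, the normals of the two pencils sweep two different $2$-planes, which need not contain nearly parallel unit vectors at all (for $N\ge 4$ they can be mutually orthogonal), and when $\kappa$ and $\kappa'$ lie in a common hyperplane the intersections of nearby members stay bounded (they contain $\kappa\cap\kappa'$). The paper sidesteps all of this by never needing full pencils: it works directly with a single pair $\Pi_n,\Pi_m$ of nearly parallel reflection hyperplanes and the finitely many reflections they generate about $\kappa_{n,m}$ (Remark~\ref{remark2}), which already suffice to move $P$ as required.
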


\begin{proof}
Assume by contradiction that this is not the case. We can
find a sequence of different reflection hyperplanes $\Pi_n$, with normal $\nu_n$, such that $\nu_n$ converges to $\nu_{\infty}$ as $n\to+\infty$. For $m>n$, let $\kappa_{n,m}=\Pi_n\cap\Pi_m,$
 and $\alpha_{n,m}=\nu_n\cdot\nu_m$.  Without loss of generality, we can assume that $\alpha_0\leq \alpha_{n,m}<1$ for any $m>n$.
 If for some $m>n$ the distance of $\kappa_{n,m}$ from the origin is greater than or equal to $2R_0+1$, we obtain a contradiction by the property described above.
 Hence, we can assume that the distance of $\kappa_{n,m}$ from the origin is less than $2R_0+1$  for any $m>n$.
 
Let $\mathcal{F}=\bigcup_{k=0}^{N-1}\mathcal{F}_k$ be the set of all faces of $\Sigma$. Let $\delta>0$ be the minimum distance between any two disjoint faces of $\Sigma$, that is,
$$\delta=\min\left\{\mathrm{dist}(\sigma,\sigma'):\ \sigma,\,\sigma'\in\mathcal{F},\ \sigma\cap\sigma'=\emptyset\right\}.$$

We can find a constant $\alpha_1$, $\alpha_0\leq\alpha_1<1$, depending on $R_0$ and $\delta$ only, such that if $\alpha_1\leq \alpha_{n,m}<1$ the following holds.
For any $P\in\overline{B_{R_0}}$, we can find a reflection hyperplane $\Pi$ containing $\kappa_{n,m}$, depending on $P$, such that if $P'$ is the reflection of $P$ in $\Pi$, then $0<\|P'-P\|\leq \delta/2$. Again without loss of generality, we can assume that $\alpha_1\leq \alpha_{n,m}<1$ for any $m>n$.

Let us fix any $m>n$. We have that $\alpha=\alpha_{n,m}$ satisfies $\alpha_1\leq \alpha<1$. We call $\kappa=\kappa_{n,m}$.
Without loss of generality, up to a rigid change of coordinates, let $\kappa=\tilde{Q}\times \R^{N-2}$, with $\tilde{Q}$ a point in $\R^2$, so that $Q=(\tilde{Q},0,\ldots,0)$
is the element of $\kappa$ closest to the origin.
Let, for any $P=(P_1,\ldots,P_N)\in \R^N$, $s(P)$ denote the distance of $P$ from $\kappa$.
For any $r>0$, let $\kappa_r=\{P\in \R^N:\ s(P)\leq r\}$. We begin with the following remark.
Let $r_0=\min\{r:\ \Sigma\subset \kappa_r\}$.
If $P\in\Sigma$ is such that $s(P)=r_0$, then $P$
can not belong to the interior of an $(N-1)$-face of $\Sigma$. Instead, it
 belongs to a $k$-face of $\Sigma$, with $k\leq N-2$, contained in $(P_1,P_2)\times \R^{N-2}$. Note that such a face may consist of $P$ only.
 Actually, we can indeed find a vertex $P$ of $\Sigma$ such that $s(P)=r_0$. Let $P'$ be its reflection in a reflection hyperplane $\Pi$ as constructed above. We have that
 $s(P')=r_0$, $P'$ is not contained in $(P_1,P_2)\times \R^{N-2}$ and its distance from $P$ is less than $\delta$. Hence $P'$ can not belong to $\Sigma$. On the other hand, we can find $(N-1)$-faces of $\Sigma$, $C_1,\ldots,C_N$, belonging to hyperplanes $\tilde{\Pi}_1,\ldots,\tilde{\Pi}_N$ with normals $\tilde{\nu}_1,\ldots,\tilde{\nu}_N$ respectively, such that $\tilde{\nu}_1,\ldots,\tilde{\nu}_N$ are linearly independent and $P\in C_i$ for any $i=1,\ldots,N$.
By reflecting in $\Pi$, it is easy to see that the hyperplanes $\Pi_i=T_{\Pi}(\tilde{\Pi}_i)$, $i=1,\ldots,N$, are actually reflection hyperplanes passing through $P'$ and thus we obtain a contradiction. The proof is concluded.
\end{proof}

Before stating the next lemma, we recall the following classical result, see for details \cite[Lemma~2.14]{LRX}.

\begin{remark}\label{connremark}
Let $D$ be an open connected set in $\R^N$, $N\geq 2$. Let $C$ be the union of a finite number of $(N-2)$-dimensional subspaces. Then $D\backslash C$ is still connected.
\end{remark}

\begin{lemma}\label{lemmasimplex}
Let $N_0<N$.
Let $S$ be a linear subspace of $\R^N$  of dimension $M$, with $1< M\leq N_0$. Then there can not exist $M+1$ reflection hyperplanes $\Pi_1,\ldots,\Pi_{M+1}$, with normals $\nu_1,\ldots,\nu_{M+1}$ belonging to $S$, and $H_0$, an $M$-dimensional bounded convex polyhedron in $S$ whose boundary is contained in the union of $\tilde{\Pi}_i=\Pi_i\cap S$, $i=1,\ldots,M+1$.
\end{lemma}

\begin{proof} We let $\R^N=\R^M\times \R^{N-M}$ and, without loss of generality, $S=\R^M\times \{0\}$. We identify $S$ with $\R^M$, $H_0$ with a bounded convex polyhedron in $\R^M$ and $\tilde{\Pi}_i$, $i=1,\ldots,M+1$, with hyperplanes in $\R^M$. Let us consider all reflection hyperplanes $\Pi$
of the kind $\Pi=\tilde{\Pi}\times \R^{N-M}$ with $\tilde{\Pi}$ a hyperplane in $\R^M$. With a little abuse of notation, we still call $\tilde{\Pi}$ a reflection hyperplane.

Let $K$ be the union of all $(M-2)$-dimensional subspaces of $\R^M$ obtained by intersecting
any pair of different reflection hyperplanes $\tilde{\Pi}$.
Let $V$ be the set of all points of $\R^M$ obtained by intersecting any $M$ reflection hyperplanes whose normals are linearly independent.
 The set $K$ is closed whereas the set $V$ is finite, hence bounded and closed. 
Let us fix $R>0$ such that $V\subset B_R$. Consequently, $H_0\subset B_R$ as well.

Let $\gamma:[0,1]\to\R^M$ be a smooth curve connecting $x_0$, a point in the interior of $H_0$, to $x_1$, a point such that $\|x_1\|\geq 4R+2$. We can assume that $x_1$ does not
belong to any reflection hyperplane $\tilde{\Pi}$ and, by Remark~\ref{connremark}, that $\gamma$ does not contain any element of $K$. Moreover, we can assume that for some $c>0$, $\|\gamma'(t)\|\geq c$ for any $t\in [0,1]$.

Let $t_0$ be the largest $t\in [0,1]$ such that $\gamma(t)\in H_0$. Let $\tilde{\Pi}^0$ be the (only) reflection hyperplane to which $\gamma(t_0)$ belongs. We call $H_1$ the reflection of
$H_0$ in $\tilde{\Pi}^0$. It is clear that for some $t>t_0$, $\gamma(t)$ belongs to the interior of $H_1$. Moreover, any $(M-1)$-dimensional face of $H_1$ is contained in a reflection hyperplane and any vertex of $H_1$ belongs to $V$.
Let $t_1>t_0$ be the last $t\in (t_0,1]$ such that $\gamma(t)\in H_1$. Let $\tilde{\Pi}^1$ be the (only) reflection hyperplane to which $\gamma(t_1)$ belongs.
We note that $\tilde{\Pi}^1$ is different from $\tilde{\Pi}^0$.
 We call $H_2$ the reflection of
$H_1$ in $\tilde{\Pi}^1$. It is clear that for some $t>t_1$, $\gamma(t)$ belongs to the interior of $H_2$. Moreover, any $(M-1)$-dimensional face of $H_2$ is contained in a reflection hyperplane and any vertex of $H_2$ belongs to $V$. By induction, with the same procedure, we construct $t_1,\ldots,t_n$ until $\|\gamma(t_n)\|>4R+1$. If this is the case, $H_n\cap B_R=\emptyset$ but the vertices of $H_n$ must belong to $V$ and we have a contradiction. Otherwise, we construct a sequence $\{t_n\}_{n\in\N}$, with
$t_n\to t_{\infty}<1$. Since $t_{n+1}-t_n\to 0$ as $n\to+\infty$, and $\tilde{\Pi}^{n+1}$ is different from $\tilde{\Pi}^{n}$ for any $n\in\N$,
we can find a point $P_n\in H_n\cap K$ such that $\|\gamma(t_n)-P_n\|\to 0$ as well. Clearly, $P_n\to\gamma(t_{\infty})$ as $n\to+\infty$. Since $K$ is closed, we conclude that $\gamma(t_{\infty})\in K$, thus we obtain a contradiction. 
\end{proof}

\begin{lemma}\label{lemmacommonplane}
If $N_0<N$, then there exists an $(N-N_0)$-dimensional subspace $\kappa$ such that $\kappa$ is contained in all reflection hyperplanes.

Let $\Pi_1,\ldots,\Pi_m$ be all the reflection hyperplanes and $W=\{T_{\Pi_1},\ldots,T_{\Pi_m}\}$. Then the set
$$\mathcal{T}=\big\{T:\ T\text{ is the finite composition of reflections belonging to }W\big\}$$
is finite.
\end{lemma}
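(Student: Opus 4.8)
The plan is to analyze the group structure of $\mathcal{T}$ and then extract the common subspace from a fixed-point argument. First I would record two consequences of Lemma~\ref{lemma21}: since $N_0<N$ there are only finitely many reflection hyperplanes $\Pi_1,\dots,\Pi_m$, and moreover no two of them are parallel. Indeed, two distinct parallel reflection hyperplanes would, by Remark~\ref{remark2} and iterated reflection, produce infinitely many equispaced parallel reflection hyperplanes, contradicting finiteness. Next I would observe that $\mathcal{T}$ is exactly the group generated by the involutions $T_{\Pi_1},\dots,T_{\Pi_m}$, and that, by Remark~\ref{remark2} applied one reflection at a time, every $T\in\mathcal{T}$ maps each reflection hyperplane to a reflection hyperplane. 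Hence $\mathcal{T}$ permutes the finite set $\{\Pi_1,\dots,\Pi_m\}$.

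To prove finiteness of $\mathcal{T}$ I would split each affine isometry into its linear part $\ell(\cdot)$ and its translation part. Write $V_0$ for the $N_0$-dimensional span of the normals $\nu_1,\dots,\nu_m$, so that $V_0^\perp$ has dimension $N-N_0$. Each linear reflection $\ell(T_{\Pi_i})$ fixes $V_0^\perp$ pointwise (because $\nu_i\in V_0$) and sends each $\nu_j$ to $\pm\nu_{j'}$; consequently the linear-part group $\ell(\mathcal{T})$ permutes the finite set $\{\pm\nu_1,\dots,\pm\nu_m\}$. The resulting map from $\ell(\mathcal{T})$ into the symmetric group of $\{\pm\nu_i\}$ is injective, since a $g\in\ell(\mathcal{T})$ fixing every $\nu_i$ and every vector of $V_0^\perp$ must be the identity; thus $\ell(\mathcal{T})$ is finite. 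It then remains to show $\mathcal{T}$ contains no nontrivial translation. On one hand, every $T_{\Pi_i}$, and hence every element of $\mathcal{T}$, leaves the $V_0^\perp$-component of a point unchanged, because the reflection only displaces points in the direction $\nu_i\in V_0$; so any translation in $\mathcal{T}$ is by a vector $v\in V_0$. On the other hand, a translation by $v$ sends each $\Pi_i$ to the parallel hyperplane $\Pi_i+v$, which by the no-parallel property must equal $\Pi_i$; hence $v\cdot\nu_i=0$ for all $i$, i.e.\ $v\in V_0^\perp$. Therefore $v\in V_0\cap V_0^\perp=\{0\}$, the homomorphism $\ell$ is injective on $\mathcal{T}$, and $\mathcal{T}\cong\ell(\mathcal{T})$ is finite, which is the second assertion.

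For the common subspace I would use that a finite group of affine isometries has a fixed point: the orbit average $p=|\mathcal{T}|^{-1}\sum_{T\in\mathcal{T}}T(q)$ is fixed by all of $\mathcal{T}$, hence by each $T_{\Pi_i}$, which forces $p\in\Pi_i$ for every $i$. Translating $p$ to the origin turns every $\Pi_i$ into the linear hyperplane $\nu_i^\perp$, so the common intersection is $\bigcap_i\nu_i^\perp=V_0^\perp$, of dimension $N-N_0$. Thus $\kappa=p+V_0^\perp$ is an $(N-N_0)$-dimensional affine subspace contained in every reflection hyperplane, which is the first assertion.

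I expect the main obstacle to be the finiteness of $\mathcal{T}$ rather than the extraction of $\kappa$: once finiteness is in hand the common fixed point, and hence $\kappa$, come essentially for free, whereas finiteness genuinely requires ruling out accumulation both in the linear part (controlled by the permutation of the finitely many normals) and in the translation part (controlled by combining the $V_0^\perp$-invariance with the absence of parallel reflection hyperplanes). A clean alternative for the first assertion, bypassing the averaging step, would be to invoke Lemma~\ref{lemmasimplex} with $M=N_0$: if the $\Pi_i$ had no common point, then finitely many of them in general position would bound a bounded convex polyhedron inside a translate of $V_0$, contradicting that lemma.
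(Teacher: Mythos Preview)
Your argument is correct, but it proceeds in the opposite order from the paper and relies on a genuinely different mechanism. The paper first establishes the common subspace~$\kappa$: it fixes $N_0$ reflection hyperplanes with independent normals, sets $\kappa$ to be their intersection, and then shows by an iterated application of Lemma~\ref{lemmasimplex} (descending through dimensions $M=N_0,N_0-1,\dots,2$) that any further reflection hyperplane must also contain $\kappa$. Only afterwards does it deduce finiteness of $\mathcal{T}$, using that every $T\in\mathcal{T}$ fixes $\kappa$ and is therefore determined by the images $T(\nu_1),\dots,T(\nu_{N_0})\in\{\pm\nu_1,\dots,\pm\nu_m\}$. Your route instead proves finiteness of $\mathcal{T}$ first, via the group-theoretic decomposition into linear and translation parts, and then recovers $\kappa$ from the orbit-averaging fixed point; Lemma~\ref{lemmasimplex} is never invoked. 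The trade-off is that the paper's order makes the second step almost immediate once $\kappa$ is in hand, at the cost of the somewhat delicate inductive use of Lemma~\ref{lemmasimplex}; your order replaces that geometric induction with the clean observation that the absence of parallel reflection hyperplanes (itself a direct consequence of Lemma~\ref{lemma21}) forces the translation subgroup of $\mathcal{T}$ to be trivial, and then gets $\kappa$ for free from the finite-group fixed point. Both arguments ultimately hinge on the same fact that $\ell(\mathcal{T})$ permutes the finite set $\{\pm\nu_i\}$.
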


\begin{proof}
Let $\Pi_1,\ldots,\Pi_{N_0}$ be reflection hyperplanes whose normals are linearly independent. 
Then let $\kappa$ be the intersection of $\Pi_1,\ldots,\Pi_{N_0}$. Clearly
$\kappa$ is
an $(N-N_0)$-dimensional subspace. We need to show that all reflection hyperplanes contain $\kappa$.

Let $\R^N=\R^{N_0}\times \R^{N-N_0}$.
For simplicity and without loss of generality, let us assume that the span of the corresponding normals
$\nu_1,\ldots,\nu_{N_0}$ is $S_{N_0}=\R^{N_0}\times\{0\}$ and that $\kappa=\{0\}\times \R^{N-N_0}$.
It is enough to consider the geometry in $S_{N_0}$. For any reflection hyperplane $\Pi$, we call $\tilde{\Pi}$ its intersection with $S_{N_0}$ and we identify it with a hyperplane of $\R^{N_0}$, which with a little abuse of notation we still call $\tilde{\Pi}$ and still refer to as a reflection hyperplane.

Let us assume, by contradiction, that there exists a reflection hyperplane $\tilde{\Pi}_{N_0+1}$ not passing through the origin. Let $\nu_{N_0+1}$ be its normal.
If any $N_0$ elements of $\{\nu_1,\ldots,\nu_{N_0+1}\}$ are linearly independent, then $\tilde{\Pi}_1,\ldots,\tilde{\Pi}_{N_0+1}$ bound a convex polyhedron thus contradicting Lemma~\ref{lemmasimplex} with $M=N_0$.

Therefore, we can assume, up to reordering, that $\nu_2,\ldots,\nu_{N_0+1}$ are linearly dependent. Let $S_{N_0-1}$ be the span of $\nu_2,\ldots,\nu_{N_0}$. Let us call
$\tilde{\Pi}_i^1$ the restrictions of $\Pi_i$ to $S_{N_0-1}$. The intersection of $\tilde{\Pi}_2^1,\ldots,\tilde{\Pi}_{N_0}^1$ is the origin of
$S_{N_0-1}$ and $\tilde{\Pi}_{N_0+1}^1$ can not pass through the origin of $S_{N_0-1}$, otherwise $\tilde{\Pi}_{N_0+1}$ would pass through the origin of $S_{N_0}$.
If any $N_0-1$ elements of $\{\nu_2,\ldots,\nu_{N_0+1}\}$ are linearly independent, then $\tilde{\Pi}_2^1,\ldots,\tilde{\Pi}_{N_0+1}^1$ bound a convex polyhedron thus contradicting Lemma~\ref{lemmasimplex} with $M=N_0-1$.

Iterating the procedure, and with suitable reordering, we obtain that $\nu_{N_0-1},\nu_{N_0},\nu_{N_0+1}$ are linearly dependent. Calling $S_2$ the span of 
$\nu_{N_0-1}$ and $\nu_{N_0}$, and $\tilde{\Pi}_i^{N_0-2}$ the restrictions of $\tilde{\Pi}_i$ to $S_2$, we obtain that
the intersection of $\tilde{\Pi}_{N_0-1}^{N_0-2}$ and $\tilde{\Pi}_{N_0}^{N_0-2}$ is the origin of
$S_2$ and $\tilde{\Pi}_{N_0+1}^{N_0-2}$ can not pass through the origin of $S_2$. If any pair of
$\{\nu_{N_0-1},\nu_{N_0-1},\nu_{N_0+1}\}$ are linearly independent, we contradict Lemma~\ref{lemmasimplex} with $M=2$.
Hence $\Pi_{N_0+1}$ is parallel to, and different from, either $\Pi_{N_0-1}$ or $\Pi_{N_0}$. This contradicts the fact that $N_0<N$ and the first part of the proof is concluded.

Once we have shown that the all reflection hyperplanes intersect $\kappa$, we have that any element of $\mathcal{T}$ is a rigid change of coordinates keeping $\kappa$ fixed. Let us consider $T\in \mathcal{T}$. The set $\{T(\nu_1),\ldots,T(\nu_{N_0})\}$ is a base of $S_{N_0}$ and fully characterises $T$. 
Since the image through $T$ of any reflection hyperplane $\Pi$ is another reflection hyperplane, the set $\{T(\nu_1),\ldots,T(\nu_{N_0})\}$ is a subset of
$\{\pm \nu_1,\ldots,\pm\nu_m\}$. It immediately follows that $\mathcal{T}$ is finite.
\end{proof}

\begin{lemma}\label{pathlemma} Let $N_0<N$.
Let $\Sigma_0$ be a polyhedral scatterer which is symmetric with respect to all reflection hyperplanes. We call $G_0=\R^N\backslash\Sigma_0$.
Let $x_0$ be any point in $G_0$ not belonging to any reflection hyperplane. 
Then there exists a piecewise smooth path $\gamma:[0,+\infty)\to\R^N$ such that
$\|\gamma(t)\|\to+\infty$ as $t\to+\infty$ and $\gamma(t)$ belongs to $G_0$ and does not belong to any reflection hyperplane for any $t\geq 0$.
\end{lemma}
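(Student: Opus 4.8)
Since $N_0<N$, Lemma~\ref{lemma21} tells us the reflection hyperplanes are finite in number, and Lemma~\ref{lemmacommonplane} provides an $(N-N_0)$-dimensional subspace $\kappa$ contained in all of them, together with the finiteness of the group $\mathcal{T}$ generated by the corresponding reflections. As in the proof of Lemma~\ref{lemmacommonplane}, I would choose coordinates $\R^N=\R^{N_0}\times\R^{N-N_0}$ with $\kappa=\{0\}\times\R^{N-N_0}$. Each reflection hyperplane then has the form $\tilde{\Pi}\times\R^{N-N_0}$, with $\tilde{\Pi}$ a hyperplane of $\R^{N_0}$ through the origin, and $\mathcal{T}$ restricts to a \emph{finite} group $W$ of orthogonal maps of $\R^{N_0}$ fixing the origin (and fixing $\kappa$ pointwise in $\R^N$). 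The finitely many $\tilde{\Pi}$ form a central hyperplane arrangement in $\R^{N_0}$, whose complement is a finite union of open convex cones (chambers) permuted simply transitively by $W$. Because $x_0$ lies on no reflection hyperplane, its $\R^{N_0}$-projection lies in an open chamber $C$, and I set $\Omega=C\times\R^{N-N_0}$, the connected component of $\R^N\setminus\bigcup\tilde{\Pi}\times\R^{N-N_0}$ containing $x_0$; this $\Omega$ is an open convex cone in $\R^N$.

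\textbf{Folding an escaping path into the closed chamber.} Let $\rho\colon\R^N\to\overline{\Omega}$ be the folding map associated with $W$: it fixes the $\R^{N-N_0}$ component and sends the $\R^{N_0}$ component to the unique point of its $W$-orbit lying in $\overline{C}$. I would invoke the standard facts that $\rho$ is continuous, piecewise isometric, restricts to the identity on $\overline{\Omega}$, and satisfies $\|\rho(x)\|=\|x\|$ (each element of $W$ is orthogonal and fixes the origin); moreover, since $\Sigma_0$ is invariant under every element of $\mathcal{T}$, one has $\rho(G_0)\subset G_0\cap\overline{\Omega}$. Next I build an auxiliary escaping path: as $G_0$ is connected and unbounded and $\Sigma_0\subset B_{R_0}$, I take a piecewise smooth $\sigma\colon[0,+\infty)\to G_0$ with $\sigma(0)=x_0$ that first reaches a point outside $\overline{B_{R_0+1}}$ within $G_0$ and then runs straight to infinity inside $\R^N\setminus\overline{B_{R_0}}\subset G_0$, so that $\|\sigma(t)\|\to+\infty$; a general-position choice makes $\sigma$ cross the reflection hyperplanes finitely often on compacta. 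Then $\gamma_0=\rho\circ\sigma$ is a piecewise smooth path in $G_0\cap\overline{\Omega}$ with $\gamma_0(0)=x_0$ and $\|\gamma_0(t)\|=\|\sigma(t)\|\to+\infty$. The only defect of $\gamma_0$ is that it may touch the mirrors $\partial\Omega$.

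\textbf{Pushing the path into the open chamber.} Fix $e=(e',0)$ with $e'\in C$. The geometric key is that for an open convex cone one has $\overline{C}+C\subset C$; consequently, for every $x\in\overline{\Omega}$ and every $s>0$ one gets $x+se\in\Omega$. I therefore look for $\gamma(t)=\gamma_0(t)+s(t)e$ with a continuous, piecewise smooth $s\colon[0,+\infty)\to[0,1]$ satisfying $s(0)=0$ and $s(t)>0$ for $t>0$. For any such $s$, the point $\gamma(t)$ lies in the open chamber $\Omega$ for $t>0$, while $\gamma(0)=x_0\in\Omega$; hence $\gamma$ never meets a reflection hyperplane. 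Boundedness of the displacement gives $\|\gamma(t)\|\to+\infty$. It remains to keep $\gamma(t)\in G_0$: since $G_0$ is open and $\gamma_0(t)\in G_0$, for each $t$ there is $\varepsilon(t)>0$ with $\gamma_0(t)+se\in G_0$ for all $s\in[0,\varepsilon(t)]$, and $\varepsilon$ is lower semicontinuous and positive, so it dominates some continuous positive function; I choose $s(t)\le\min\{1,\varepsilon(t)\}$ with $s(0)=0$ and $s>0$ on $(0,+\infty)$. For $t$ large, $\|\gamma_0(t)\|>R_0+1$ renders the constraint vacuous, so only the compact dangerous part matters. This yields the required $\gamma$.

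\textbf{Main obstacle.} I expect the delicate point to be exactly this last displacement: moving the folded path off the mirrors into the open chamber while simultaneously staying outside $\Sigma_0$ and preserving escape to infinity. The convexity of the chamber cone makes the push direction $e$ universally admissible through the inclusion $\overline{C}+C\subset C$, and the openness of $G_0$ together with the compactness of the sublevel set $\{t:\|\gamma_0(t)\|\le R_0+1\}$ supply the uniform positive room needed to select $s(t)$ continuously with $s(0)=0$; these two ingredients are what overcome the obstacle.
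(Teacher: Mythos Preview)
Your proof is correct and rests on the same geometric idea as the paper—fold an escaping curve in $G_0$ into the closure of a single chamber using the symmetry of $\Sigma_0$, then push it off the mirror walls—but the implementation is genuinely different. The paper carries out the folding by hand: it chooses a curve in $G_0$ that avoids the codimension-two set $K=\bigcup_{i\neq j}\Pi_i\cap\Pi_j$ (via Remark~\ref{connremark}) and iteratively reflects successive arcs back into $\overline{D_0}$, proving that the procedure terminates because otherwise the curve would be forced into $K$; the final push off $\partial D_0$ is left as an unspecified ``small perturbation argument''. You instead invoke the global folding map $\rho$ onto a closed fundamental chamber from the theory of finite reflection groups, and then displace explicitly by $s(t)e$ with $e$ in the open cone, using $\overline{C}+C\subset C$; this makes the last step cleaner than in the paper. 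The price is that your argument imports a nontrivial external fact: that $W$ acts simply transitively on the chambers of the arrangement $\{\tilde{\Pi}_i\}$, equivalently that the paper's reflection hyperplanes coincide with all reflecting hyperplanes of $W$. This is true—Remark~\ref{remark2} makes the set of mirrors $W$-invariant, and a conjugation-closed generating set of reflections in a finite reflection group is the full set of reflections—but it lies outside the paper's self-contained development, whereas the paper's iterative argument needs only the finiteness of Lemma~\ref{lemma21} and Lemma~\ref{lemmacommonplane}.

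Two minor points to tighten: take $s(t)$ strictly below $\varepsilon(t)$ (e.g.\ $s(t)\le\tfrac12\min\{1,\varepsilon(t)\}$) so that $\gamma(t)\in G_0$ is guaranteed, and note that on the compact interval where the constraint is active a positive lower-semicontinuous minorant can be replaced by a positive piecewise-smooth one, so that $\gamma$ is genuinely piecewise smooth as the lemma requires.
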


\begin{proof} Let $\Pi_1,\ldots,\Pi_m$ be all reflection hyperplanes and $\kappa$ be their intersection, as in Lemma~\ref{lemmacommonplane}.

We can assume that $0\in\kappa$ and that, for some $R>0$, we have
$\Sigma\subset B_R$. Let $D_0$ be the connected component of $\R^N\backslash \left(\bigcup_{i=1}^m\Pi_i\right)$ containing $x_0$. Note that for any $s\geq R+1$, $\partial B_s\cap D_0$ is contained in $G_0$ and is connected.

Let $P$ be the projection of $x_0$ on $\kappa$. Let $l$ be the half-line starting from $P$ and passing through $x_0$. With the exception of $P$, any element of $l$ belongs to $D_0$. Let $x_1$ be an element of $l$ such that after $x_1$ any element of $l$ is outside $B_{R+1}$. Hence the half-line contained in $l$ which starts from $x_1$ is contained in $G_0$ and does not intersect
any reflection hyperplane. In order to conclude the proof, it is enough to show that $D_0\cap G_0$ is connected, namely that there exists a piecewise smooth curve $\gamma$ in $D_0\cap G_0$ connecting $x_0$ to $x_1$. We denote $s=\|x_1\|\geq R+1$.

 Let $K$ be the union of the intersections of any pair of different reflection hyperplanes. By Remark~\ref{connremark}, $G_0\backslash K$ is still connected. Let $\gamma_0:[0,1]\to \R^N$ be a smooth curve in $G_0\backslash K$ connecting $x_0$ to $x_1$.
 We can assume that for some constant $c>0$ we have $\|\gamma_0'(t)\|\geq c$ for any $t\in [0,1]$.
 
 If $\gamma_0\subset \overline{D_0}$ there is nothing to prove. In fact, by a small perturbation argument we can change $\gamma_0$ such that $\gamma_0\subset G_0\cap D_0$. Otherwise, calling $t'_0=0$, let
 $t_1>0 $ be largest $t$ 
 such that $\gamma_0(t_1)\in\partial D_0$ and $\gamma_0(t)\in \overline{D_0}$ for any $0\leq t<t_1$. We call $\tilde{\Pi}_1$ the reflection hyperplane containing $\gamma_0(t_1)$ and 
 $D_1$ the reflection of $D_0$ in $\tilde{\Pi}_1$. Let $t'_1\geq t_1$ be the largest $t$ such that $\gamma_0(t'_1)\in \tilde{\Pi}_1$ and $\gamma_0(t)\in \overline{D_0}\cup D_1$ for any $t\in
 (t_1,t'_1)$. By the symmetry of $\Sigma_0$,
$T_{\tilde{\Pi}_1}\big(\gamma_0([t_1,t'_1]\cap D_1)\big)$ is contained in $G_0\cap D_0$. Hence, by performing such a reflection, we can assume that $\gamma_0(t)\in \overline{D_0}$ for any $t\in [0,t'_1)$. Let us define for any $t\in [0,1]$
$$\gamma_1(t)=\left\{
\begin{array}{ll}\gamma_0(t) &\text{if }t\in [0,t'_1]\\
T_{\tilde{\Pi}_1}(\gamma_0(t)) & \text{if }t\in (t'_1,1].
\end{array}
\right.$$

If $\gamma_1\subset \overline{D_0}$, by a small perturbation argument we can change $\gamma_1$ such that $\gamma_1\subset G_0\cap D_0$. Then we connect $\gamma_1(1)$ to $x_1$ along $\partial B_s\cap D_0$ and the result is proved.
Otherwise, let
 $t_2>t'_1$ be largest $t$ 
 such that $\gamma_1(t_2)\in\partial D_0$ and $\gamma_1(t)\in \overline{D_0}$ for any $0\leq t<t_2$. We call $\tilde{\Pi}_2$ the reflection hyperplane containing $\gamma_1(t_2)$ and 
 $D_2$ the reflection of $D_0$ in $\tilde{\Pi}_2$.
 Note that $\tilde{\Pi}_2$ is different from $\tilde{\Pi}_1$ by construction.
  Let $t'_2\geq t_2$ be the largest $t$ such that $\gamma_1(t'_2)\in \tilde{\Pi}_2$ and $\gamma_1(t)\in \overline{D_0}\cup D_2$ for any $t\in
 (t_2,t'_2)$. By the symmetry of $\Sigma_0$,
$T_{\tilde{\Pi}_2}\big(\gamma_1([t_2,t'_2]\cap D_2)\big)$ is contained in $G_0\cap D_0$. Hence, by performing such a reflection, we can assume that $\gamma_1(t)\in \overline{D_0}$ for any $t\in [0,t'_2)$. Let us define for any $t\in [0,1]$
$$\gamma_2(t)=\left\{
\begin{array}{ll}\gamma_1(t) &\text{if }t\in [0,t'_2]\\
T_{\tilde{\Pi}_2}(\gamma_1(t)) & \text{if }t\in (t'_2,1].
\end{array}
\right.$$

We proceed iteratively in the same way. If, for some $n\in \N$, $\gamma_n\subset \overline{D_0}$, by a small perturbation argument we can change $\gamma_n$ such that $\gamma_n\subset G_0\cap D_0$. Then we connect $\gamma_n(1)$ to $x_1$ along $\partial B_s\cap D_0$ and the result is proved. Otherwise, we find a 
sequence $\{t'_n\}_{n\in\N}$ with $0<t'_n<t'_{n+1}<1$ such that $\gamma_{n-1}(t'_n)\in\partial D_0$, with $\tilde{\Pi}_n$ being the reflection hyperplane containing $\gamma_{n-1}(t'_n)$, and $\gamma_{n-1}(t)\in \overline{D_0}$ for any $0\leq t\leq t'_n$. We note that, by construction, $\tilde{\Pi}_{n+1}$ is different from $\tilde{\Pi}_n$ for any $n\in\N$. Let $t'_{\infty}=\lim_n t'_n$. We have that $\gamma_0(t'_n)\in \bigcup_{i=1}^m\Pi_i$, hence $\gamma_0(t'_{\infty})\in \bigcup_{i=1}^m\Pi_i$ as well. We conclude that
$t'_{\infty}<1$. Let $\hat{\Pi}_n$ be the reflection hyperplane to which $\gamma_0(t'_n)$ belongs. We note that
$\hat{\Pi}_{n+1}$ is different from $\hat{\Pi}_n$ for any $n\in\N$.
Since $t'_{n+1}-t'_n\to 0$ as $n\to+\infty$, we can find a point $P_n\in \hat{\Pi}_{n}\cap \hat{\Pi}_{n+1}\subset K$ such that $\|\gamma_0(t'_n)-P_n\|\to 0$ as well. Clearly, $P_n\to\gamma_0(t'_{\infty})$ as $n\to+\infty$. Since $K$ is closed, we conclude that $\gamma_0(t'_{\infty})\in K$, thus we obtain a contradiction. 
\end{proof}

\begin{lemma}\label{symmetriclemma}
If $N_0<N$, then $\Sigma$ is symmetric with respect to all reflection hyperplanes.
\end{lemma}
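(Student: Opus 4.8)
The plan is to establish the equivalent assertion $\Sigma=\Sigma_0$, where $\Sigma_0$ is the \emph{symmetrisation} of $\Sigma$, and then read off the symmetry. By Lemma~\ref{lemmacommonplane} the group $\mathcal{T}$ generated by the reflections $T_\Pi$ is finite and every reflection hyperplane contains the $(N-N_0)$-dimensional subspace $\kappa$; since $N_0<N$ we have $\dim\kappa\geq1$, and $\kappa$ passes through the origin so each $T\in\mathcal{T}$ is a linear isometry fixing $\kappa$ pointwise. Set $\Sigma_0=\bigcup_{T\in\mathcal{T}}T(\Sigma)$. As $\mathcal{T}$ is finite and each $T(\Sigma)$ is a finite union of polyhedra, $\Sigma_0$ is a bounded polyhedral set symmetric with respect to every reflection hyperplane; writing $\tilde{G}_0$ for the unbounded component of $\R^N\setminus\Sigma_0$, one has $T_\Pi(\tilde{G}_0)=\tilde{G}_0$ for each reflection hyperplane $\Pi$ and $\tilde{G}_0\subseteq G$.

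First I would symmetrise the solution. Fix a reflection hyperplane $\Pi$ with normal $\nu$. Taking $D^+=\tilde{G}_0\cap H^+_\Pi$, the function $v:=\tilde{T}_{\Pi,\nu}(u\circ T_\Pi)$ is, by (A.3), a solution of $Au=0$ on $D^-=T_\Pi(D^+)=\tilde{G}_0\cap H^-_\Pi$, where $u$ is also defined and solves $Au=0$. For $x\in\Pi$ with $\|x\|$ large one has $B_r(x)\subset\tilde{G}_0$ and, $\Pi$ being a reflection hyperplane, $Bu=0$ on $B_r(x)\cap\Pi$; Remark~\ref{ossprel1} then gives $u=v$ on $B_r(x)\cap H^-_\Pi$. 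The connectivity inside $\tilde{G}_0\cap H^-_\Pi$ needed to propagate this equality to the whole region, while keeping track of the reflections, is exactly what Lemma~\ref{pathlemma} supplies; invoking the unique continuation (A.2), I conclude $u=v$ throughout, and hence $u=\tilde{T}_{\Pi,\nu}(u\circ T_\Pi)$ on $\tilde{G}_0$ for every reflection hyperplane. Thus $u$ is $\mathcal{T}$-invariant on $\tilde{G}_0$.

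Next, arguing by contradiction, suppose $\Sigma\subsetneq\Sigma_0$. Then $\tilde{G}_0\subsetneq G$, and since $G$ is open and connected while $\tilde{G}_0$ is a nonempty proper open subset, there is a boundary point $y_0\in\partial\tilde{G}_0\cap G\subseteq\Sigma_0\setminus\Sigma$. Such a $y_0$ lies on a protruding reflected face, i.e. $y_0\in T(\operatorname{relint}C)$ for some $T\in\mathcal{T}$ and some $(N-1)$-face $C\subset\Pi_C$ of $\Sigma$, with a ball $B_r(y_0)\subset G$ meeting $\Sigma_0$ only along the flat wall $T(\Pi_C)$. Since $x_0:=T^{-1}(y_0)$ lies in $\operatorname{relint}C$, we have $Bu=0$ on $\Pi_C\cap B_\rho(x_0)$. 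Transporting this flat condition along the word defining $T$, applying (A.3)(c) at each elementary reflection and using the $\mathcal{T}$-invariance from the previous step, yields $Bu=0$ on $\Pi'':=T(\Pi_C)$ near $y_0$, with $Au=0$ on the full ball $B_r(y_0)$ even though $\Pi''$ carries no genuine obstacle. Propagating $Bu=0$ along the component of $G\cap\Pi''$ through $y_0$ (Remark~\ref{ossprel2}), which is unbounded because $\Sigma$ is bounded, shows that $\Pi''$ is itself a reflection hyperplane.

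The contradiction, and the main obstacle, lies in the concluding geometric bookkeeping. Picking $\nu_0\in\kappa$ (possible as $\dim\kappa\geq1$), a supporting hyperplane $\{x\cdot\nu_0=h\}$ of $\Sigma$ in the direction $\nu_0$ is \emph{transverse} to $\kappa$ (it meets $\kappa$ in dimension $\dim\kappa-1$) and is fixed as a set by every $T\in\mathcal{T}$ because $T\nu_0=\nu_0$; if an $(N-1)$-face carried in such a hyperplane protrudes, the reflection hyperplane $\Pi''$ produced above is transverse to $\kappa$, contradicting Lemma~\ref{lemmacommonplane}. The delicate point is to \emph{guarantee} that a protruding face transverse to $\kappa$ exists: if every protruding face were radial (its hyperplane containing $\kappa$), then the $\mathcal{T}$-orbits of all transverse faces of $\Sigma$, including the supporting ones, would already lie in $\Sigma$, and together with the symmetry of $\Sigma_0$ this forces $\Sigma_0=\Sigma$, again a contradiction. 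Making this dichotomy rigorous — essentially ruling out a purely ``sideways'' $\mathcal{T}$-protrusion capped by the obstacle's own transverse faces — along with the careful transport of the flatness condition across the word for $T$, is the part demanding the most care; the case $N=2$, where $\Pi''\cap G$ may be disconnected so that $\Pi''$ need not be a full reflection hyperplane, must be handled separately and reduces to the already settled parallel-hyperplane case.
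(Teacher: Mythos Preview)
Your strategy diverges from the paper's and contains a genuine gap at the step where you promote $\Pi''=T(\Pi_C)$ to a reflection hyperplane. You assert that the connected component of $G\cap\Pi''$ through $y_0$ is unbounded ``because $\Sigma$ is bounded'', so that Remark~\ref{ossprel2} carries $Bu=0$ out to infinity. This fails for every $N\geq 2$, not only $N=2$: connectedness of $G$ in $\R^N$ says nothing about connectedness of the slice $\Pi''\cap G$. A polyhedral solid torus in $\R^3$, cut by its equatorial plane, already has an annular cross-section whose inner disk is a bounded component of $\Pi''\cap G$; if $\Sigma_0$ happens to fill the hole of the torus, $y_0$ can sit there and propagation along $\Pi''$ halts at $\partial\Sigma$. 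Without $\Pi''$ being a reflection hyperplane, the transversality contradiction with $\kappa$ in your final step never fires. (Your closing dichotomy --- ruling out that all protruding faces are ``radial'' --- is also only a sketch, but the earlier gap is already decisive.)

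The paper bypasses this obstruction by never trying to name a specific new reflection hyperplane. After building $\Sigma_0$ and observing, from the obstacle property plus symmetry, that no flat portion of $\partial\Sigma_0$ can lie inside any reflection hyperplane, it picks a flat point $x\in\partial\Sigma_0\cap G$ lying off every reflection hyperplane, and then uses Lemma~\ref{pathlemma} (applied to $\Sigma_0$) to produce a curve $\gamma\subset G_0$ from $x$ to infinity that \emph{never meets any reflection hyperplane}. Re-running the proof of Theorem~\ref{thm1} along $\gamma$ forces some $\gamma(t_1)$ to be a flat point whose associated hyperplane is a reflection hyperplane --- but $\gamma(t_1)$ lies on that hyperplane, contradicting the construction of $\gamma$. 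Thus the role of Lemma~\ref{pathlemma} here is not to supply the half-space connectivity you invoke in your symmetrisation step, but to manufacture the path along which the Theorem~\ref{thm1} argument is replayed.
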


\begin{proof}
Let us consider $\kappa$ and the set $\mathcal{T}$ defined in Lemma~\ref{lemmacommonplane}. Without loss of generality, we assume that $0\in\kappa$ and that for some $R>0$ we have that $\Sigma\subset B_R$.

For any $T\in \mathcal{T}$, we can define $u_T$ in $T(G)=\R^N\backslash T(\Sigma)$ such that $Au_T=0$ in $T(G)$ and $u_T=u$ outside $B_{R+1}$.
 Moreover, $Bu_T=0$ on any flat part of $\partial (T(\Sigma))$.
 
 Let $G_0$ be the unbounded connected component of $\bigcap_{T\in\mathcal{T}}T(G)$ and $\Sigma_0=\R^N\backslash G_0$.
 Let us show that
$G_0$ is symmetric with respect to any reflection hyperplane.
Let $x\in G_0$, $\Pi$ be a reflection hyperplane and
$y=T_{\Pi}(x)$. Let $\gamma:[0,1]\to\R^N$ be a continuous curve in $G_0$ connecting $x$ to a point $x_1$ outside $B_{R+1}$.
For any $t\in [0,1]$, $\gamma(t)\in T(G)$ for any $T\in\mathcal{T}$. Since $T_{\Pi}\circ T$ still belongs to $\mathcal{T}$,
$\gamma(t)\in (T_{\Pi}\circ T)(G)$ for any $T\in\mathcal{T}$, hence
$T_{\Pi}(\gamma(t))\in T_{\Pi}\big((T_{\Pi}\circ T)(G)\big)=T(G)$ for any $T\in\mathcal{T}$. Hence $\gamma_1=T_{\Pi}(\gamma)\subset \bigcap_{T\in\mathcal{T}}T(G)$.
Since $\R^N\backslash \overline{B_R}\subset G_0$
and $y_1=T_{\Pi}(x_1)$ is also outside $B_{R+1}$, we conclude that $y\in G_0$ as well.

On $G_0$, we have that $u=u_T$ for any $T\in\mathcal{T}$. Since $\Sigma_0=\bigcup_{T\in\mathcal{T}}T(\Sigma)$, we have that $\Sigma_0$ is a polyhedral obstacle
containing $\Sigma$ and $Bu=0$ on any flat portion of $\partial\Sigma_0$. Moreover, by symmetry, no flat portion of $\partial\Sigma_0$ is a subset of any reflection hyperplane.

Let us assume there $\Sigma_0$ is different from $\Sigma$. Then there exists $x\in \partial\Sigma_0\cap G$. Without loss of generality, we can assume that $x$ belongs
to a flat portion of $\partial\Sigma_0$ and $x$ does not belong to any reflection hyperplane. For some $\nu$ normal to the cell of $\partial\Sigma_0$ to which $x$ belongs and some $\varepsilon_0>0$, we have that, for any $t\in (0,\varepsilon_0]$, $x+t\nu\in G_0$ and does not belong to any reflection hyperplane. We call $x_0=x+\varepsilon_0\nu$ and, by Lemma~\ref{pathlemma}, we can find a piecewise smooth curve $\gamma:[0,+\infty)\to\R^N$ such that $\gamma(0)=x$, $\gamma(t)=x+t\nu$ for any $t\in (0,\varepsilon_0]$ and $\gamma(t)$ belongs to $G_0$ and
does not belong to any reflection hyperplane for any $t\geq \varepsilon_0$. We note that $x$ is a flat point for $u$.
By the same argument used in the proof of Theorem~\ref{thm1}, we can find $t_1\in [0,+\infty)$ such that $x_1=\gamma(t_1)$ is a flat point belonging to a reflection hyperplane for $u$.
Since no point of $\gamma$ belongs to any reflection hyperplane, we obtain a contradiction.

We conclude that $\Sigma_0=\Sigma$ hence $\Sigma$ is symmetric with respect to any reflection hyperplane.
\end{proof}

\begin{proof}[Proof of Theorem~\textnormal{\ref{thm2}}]
By renaming $\Sigma'=\R^N\backslash\tilde{G}$, without loss of generality, we can assume that $\Sigma'$ is a polyhedral obstacle, with $\Sigma\subset\Sigma'$
and $\Sigma'\backslash\Sigma\neq\emptyset$.
Let us assume by contradiction that $N_0<N$.
Let $x_0\in G$ be such that $x_0$ does not belong to any reflection hyperplane and $x_0$ is contained in the interior of $\Sigma'$. Let $\gamma$ be as in Lemma~\ref{pathlemma} with $\Sigma_0$ replaced by $\Sigma$. It is clear that there exists $t_0>0$ such that $\gamma(t_0)\in \partial\Sigma'$, hence $\gamma(t_0)$ is a flat point for $u$. Again by the same argument used in the proof of Theorem~\ref{thm1}, we can find $t_1\geq t_0$ such that $y_1=\gamma(t_1)$
is a flat point belonging to a reflection hyperplane for $u$.
This contradicts the properties of $\gamma$ and the proof is concluded.
\end{proof}

\subsection{The case of mixed boundary conditions}\label{sec2bis}

Now we briefly consider the case of mixed boundary conditions. Let us assume that we have $n$ different boundary conditions $B^1,\ldots,B^n$ satisfying
(A.3)(a) and (A.3)(b), with $\tilde{T}_{\Pi,\nu}$ clearly depending on $i\in\{1,\ldots,n\}$. We also assume that, for any $i=1,\ldots,n$, the property (A.3)(c) is replaced by
\begin{enumerate}[({A}.3)(c$_1$)]
\item Let $\Pi_1$ be any hyperplane and let $x\in \Pi_1$ be such that, for some $\nu_1$ orthogonal to $\Pi_1$, $B_r(x)\cap H^+_{\Pi_1} \subset D^+$. Then, for any $j=1,\ldots,n$, if $B^ju^+=0$ on $B_r(x)\cap \Pi_1$, then
$B^ju^-=0$ on $T_{\Pi}(B_r(x)\cap \Pi_1)$.
\end{enumerate}

Let $\Sigma$ be a polyhedral scatterer. We assume that $u$ is a solution to $Au=0$ in $G=\R^N\backslash\Sigma$ and that
for any cell $C$, contained in a hyperplane $\Pi$, and any $\nu$ orthogonal to $\Pi$ such that for any 
$x$ belonging to the interior of $C$ there exists $r>0$ with $B_r(x)\cap H^+_{\Pi}\subset G$, we have $B^iu=0$ in the interior of the cell $C$ for some $i\in\{1,\ldots,n\}$ depending on $C$.

We note that, in particular in the case of polyhedra, on any $(N-1)$-face we can have more than one boundary condition, that is, we can split such a face into two or more internally pairwise disjoint cells and on each of them a different boundary condition is satisfied.

Definition~\ref{flatdef} is replaced by the following.
\begin{definition}\label{flatdefmixed}
 A point $x\in G$ is a \emph{flat point} for $u$ if there exists $\Pi$ passing through $x$ and $i\in\{1,\ldots,n\}$ such that $B^iu=0$ on $\Pi\cap B_r(x)$ for some $r>0$ with $B_r(x)\subset G$.
 
 Furthermore, any hyperplane $\Pi$ such that, for some $i\in\{1,\ldots,n\}$, $B^iu=0$ on $\Pi\backslash \overline{B_R}$ for some $R>0$ will be called a \emph{reflection hyperplane}.
\end{definition}

With the same identical proofs, using the suitable reflection depending on the boundary condition, the two main theorems, Theorem~\ref{thm1} and Theorem~\ref{thm2}, can be restated in the case of mixed boundary conditions as follows.
\begin{theorem}\label{thm3}
Let us assume that $u$ admits a flat point $x\in G$. Then there exists a reflection hyperplane $\Pi_1$.

Let $\Sigma$ be a polyhedral obstacle. Let $\Sigma'$ be a finite union of polyhedra with $\Sigma'\backslash\Sigma\neq\emptyset$.
Let $\tilde{G}$ be the unbounded connected component of $\R^N\backslash(\Sigma\cup\Sigma')$.
Assume that any $x\in\partial \tilde{G}\cap G$ is a flat point.
 Then there exist $N$ reflection hyperplanes $\Pi_1,\ldots,\Pi_N$, whose corresponding normals $\nu_1,\ldots,\nu_N$ are linearly independent.
\end{theorem}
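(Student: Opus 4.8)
The plan is to show that the proofs of Theorem~\ref{thm1} and Theorem~\ref{thm2} transfer \emph{verbatim} to the mixed setting, once one checks that every step uses the boundary operator only through properties of the individual conditions $B^i$. Inspecting the arguments of Section~\ref{sec2}, the operator $B$ enters exclusively via (A.3)(a) and (A.3)(b) — which produce the local reflection/extension statements of Remark~\ref{ossprel1} and Remark~\ref{ossprel2} — and via (A.3)(c), which is used only to guarantee that reflected flat portions of boundary remain flat. In the mixed case each $B^i$ is assumed to satisfy (A.3)(a) and (A.3)(b), and (A.3)(c$_1$) is precisely (A.3)(c) applied separately to each $B^j$. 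Thus, whenever the original proofs invoke a flat point, a reflection hyperplane, or the reflection operator $\tilde T_{\Pi,\nu}$, I fix the boundary-condition index $i$ furnished by Definition~\ref{flatdefmixed} and rerun the same argument with $B$ and $\tilde T_{\Pi,\nu}$ replaced by $B^i$ and $\tilde T^i_{\Pi,\nu}$.

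For the first assertion I would reproduce the proof of Theorem~\ref{thm1} line by line, reading the $E(x)$ construction of Definition~\ref{flatdef} with the fixed index $i$. That construction, together with Remark~\ref{remark1}, depends only on the $B^i$-versions of Remark~\ref{ossprel1} and Remark~\ref{ossprel2} (valid since $B^i$ obeys (A.3)(a),(b)) and on (A.3)(c$_1$) to see that $\partial E\setminus\Sigma$ consists of flat points for the same index $i$; in particular, if $E(x)$ is unbounded then $B^iu=0$ on $\Pi\setminus\overline{B_R}$ and $\Pi$ is a reflection hyperplane in the sense of Definition~\ref{flatdefmixed}. The one point needing care is the closedness of the set of flat points along a curve: the limiting argument from the regularity (A.1) must be applied to a single condition, which is possible because there are only finitely many indices $i$, so one first passes to a subsequence along which the index $i_n$ is constant. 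With this adjustment, the dichotomy between the flat-point parameter set along the escaping curve being unbounded and the set $E(\tilde x)$ at the last flat point $\tilde x$ being unbounded yields a reflection hyperplane exactly as before.

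For the second assertion I would argue by contradiction assuming $N_0<N$. Lemma~\ref{lemma21}, Lemma~\ref{lemmasimplex}, Lemma~\ref{lemmacommonplane} and Lemma~\ref{pathlemma} are purely geometric statements about the collection of reflection hyperplanes and the faces of $\Sigma$; they never refer to the boundary operator beyond the notion of reflection hyperplane, whose meaning is fixed by Definition~\ref{flatdefmixed}, so they hold unchanged. The boundary condition re-enters only in Lemma~\ref{symmetriclemma}, where one builds $u_T$ on $T(G)$ for $T\in\mathcal{T}$ and asserts that a flat portion of $\partial(T(\Sigma))$ carries some $B^iu_T=0$. Writing $T$ as a composition of reflections in reflection hyperplanes and applying (A.3)(c$_1$) face by face, with $\tilde T^i_{\Pi,\nu}$ chosen according to the condition carried by that face, gives this, while the subsidiary flat-point argument forcing $\Sigma_0=\Sigma$ is the already-transferred proof of Theorem~\ref{thm1}. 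With $\Sigma$ thus symmetric with respect to all reflection hyperplanes, Lemma~\ref{pathlemma} furnishes an escaping path avoiding every reflection hyperplane, and the Theorem~\ref{thm1} argument applied along it produces a flat point lying on a reflection hyperplane, a contradiction. Hence $N_0=N$, which together with the easy reductions preceding Lemma~\ref{lemma21} yields $N$ reflection hyperplanes with linearly independent normals.

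I expect the main obstacle to be bookkeeping rather than conceptual: tracking, under a composition $T$ of reflections, which condition $B^i$ and which operator $\tilde T^i_{\Pi,\nu}$ governs each face of $T(\Sigma)$, and verifying that (A.3)(c$_1$) propagates flatness consistently through such compositions in Lemma~\ref{symmetriclemma}. The only other delicate point is the repeated appeal to finiteness of $n$ to keep the index constant in every continuity and limiting argument, such as the closedness of flat points and the persistence statement of Remark~\ref{remark2}; once these are handled, no new estimate or construction is required.
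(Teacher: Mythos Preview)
Your approach is correct and matches the paper's, which simply asserts that the proofs of Theorem~\ref{thm1} and Theorem~\ref{thm2} carry over identically ``using the suitable reflection depending on the boundary condition''; your write-up is in fact more detailed than the paper's one-line justification, correctly flagging the need to pass to a constant-index subsequence in the closedness argument and to track indices in Remark~\ref{remark2} and Lemma~\ref{symmetriclemma}. One small correction: the flat points on $\partial E\setminus\Sigma$ need not carry the \emph{same} index $i$ as the hyperplane $\Pi$ of $x$, since they arise from reflected cells of $\partial\Sigma$ each of which may carry its own $B^j$ (and (A.3)(c$_1$) yields $B^ju^-=0$ on the reflected cell, not $B^iu^-=0$); this is harmless, because Definition~\ref{flatdefmixed} only requires some index, and the contradiction in the Theorem~\ref{thm1} argument goes through regardless.
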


\section{Application to uniqueness results for the determination of polyhedral scatterers and examples}\label{sec3}

Let $\Sigma$ and $\Sigma'$ be two polyhedral scatterers contained in $B_{R_0}$. Let $u$ satisfy $Au=0$ in $G$ and be such that $Bu=0$ on any flat part of $\partial\Sigma$. Let $u'$
satisfy $Au'=0$ in $G'=\R^N\backslash\Sigma'$ and be such that $Bu'=0$ on any flat part of $\partial\Sigma'$. Assume that $u=u'$ on an open subset $\tilde{D}$ of $\tilde{G}$, $\tilde{G}$ being the unbounded connected component of $\R^N\backslash (\Sigma\cup\Sigma')$. For example, $\tilde{D}$ can be any open subset of $\R^N\backslash \overline{B_{R_0}}$. Then, by unique continuation $u=u'$ on $\tilde{G}$, thus $u$ and $u'$ have the same reflection hyperplanes.
Our aim is to prove a uniqueness result, that is, to show that $\Sigma=\Sigma'$. In other words, the measurement $u|_{\tilde{D}}$, or any equivalent one, uniquely determines the polyhedral scatterer $\Sigma$. 
The argument is the following. Assume, by contradiction, that $\Sigma\neq \Sigma'$.
Up to swapping $\Sigma$ with $\Sigma'$, we can find $x\in \partial \tilde{G}\backslash \Sigma\subset \partial \Sigma'\backslash\Sigma$, hence $u$ admits a flat point. By using Theorem~\ref{thm1}, we can conclude that $u$, and $u'$ as well, admits a reflection hyperplane.
If we have suitable conditions guaranteeing that $u$, and equivalently $u'$, does not admit any reflection hyperplane,
we obtain a contradiction and the uniqueness result holds true.

Under the same assumptions, if $\Sigma$ and $\Sigma'$ are different polyhedral obstacles, by 
using Theorem~\ref{thm2} instead, we can conclude that $u$, and $u'$ as well, admits $N$ reflection hyperplanes whose normals are linearly independent.
If we have suitable conditions guaranteeing that $u$, and equivalently $u'$, does not admit $N$ reflection hyperplanes whose normals are linearly independent, the measurement $u|_{\tilde{D}}$, or any equivalent one, uniquely determines the polyhedral obstacle $\Sigma$.

For example, let us assume that $u$ and $u'$ can be written as
$$u=u^i+u^s\quad\text{and}\quad u'=u^i+(u')^s$$
where $u^i$ is an entire solution to $Au=0$. About $u^s$, we assume that it satisfies the following decay condition at infinity, namely that, for any $\varepsilon>0$, there exists $R>R_0$ such that, for any unit vector $\nu$,
\begin{equation}\label{conseqofradiation}
|Bu^s|,\,|B((u')^s)|\leq \varepsilon\quad\text{outside }B_R.
\end{equation}
Then on any reflection hyperplane $\Pi$ of $u$ or of $u'$ we have
\begin{equation}\label{propertyofincident}
|Bu^i|\leq \varepsilon\quad \text{on }\Pi\backslash \overline{B_R}.
\end{equation}

If $u=u'$ on $\tilde{D}$, or equivalently $u^s=(u')^s$ on $\tilde{D}$, 
then, on any reflection hyperplane $\Pi$ of $u$ and $u'$,
\eqref{propertyofincident} holds.
If $\Sigma\neq \Sigma'$, then
\begin{equation}\label{thm1appl}\exists
\text{ a hyperplane $\Pi$ such that }|Bu^i|\leq \varepsilon\text{ on }\Pi\backslash \overline{B_R}.
\end{equation}
If we further assume that $\Sigma$ and $\Sigma'$ are polyhedral obstacles, then
\begin{multline}\label{thm2appl}\exists
\text{ $N$ hyperplanes $\Pi_1,\ldots,\Pi_N$ with linearly independent normals}\\\text{such that }
|Bu^i|\leq \varepsilon\text{ on }\Pi_i\backslash \overline{B_R}\text{ for any }i=1,\ldots,N.
\end{multline}
Hence proving the corresponding uniqueness results reduces to finding a suitable $u^i$ violating either \eqref{thm1appl} or \eqref{thm2appl}.

This situation is typical of inverse scattering problems. In this context $u^i$ is called the \emph{incident field} or \emph{wave}, whereas $u^s$ is called the \emph{scattered field} or \emph{wave}. A suitable condition at infinity, the \emph{radiation condition}, has to be imposed on $u^s$ and it usually guarantees the
validity of \eqref{conseqofradiation}. 
The scattered field may be measured either on the set $\tilde{D}$ (\emph{near-field data}) or at infinity (\emph{far-field data}). In fact, typically some kind of Rellich lemma holds, that is, if the far-field data of $u^s$ and $(u')^s$ coincide then $u^s$ and $(u')^s$ coincide on the whole $\tilde{G}$.

In the next, we just collect a few examples to show the applicability of the theory. The list is clearly not exhaustive.

\subsubsection{Acoustic waves}

Let $\omega\in \R$, $\omega> 0$, and
$$Au=\Delta u+\omega^2 u.$$
If $u$ solves $Au=0$, then $u$ is real analytic. We use two boundary conditions:
$Bu=B_Du=u$ for the Dirichlet case, with $\tilde{T}_{\Pi}(u)=-u$, or $Bu=B_Nu=\nabla u\cdot\nu$ for the Neumann case, with $\tilde{T}_{\Pi}(u)=u$, in both cases independently of $\nu$. We note that $B_D$ and $B_N$ satisfy (A.3)(c$_1$) as well.

Note that by setting $L=L_1=M$ and $A_ku=Au^k$ and $B_ku=Bu^k$ we can actually consider $M$ different measurements, that is, we can consider $M$ different experiments by using $M$ different incident fields $u^i_k$.

The radiation condition is the \emph{Sommerfeld radiation condition}. The far-field data corresponds to the so-called \emph{far-field pattern} of the scattered field. The Rellich lemma states that if the far-field pattern is zero, then the scattered wave is identically zero. The Sommerfeld radiation condition indeed implies the validity of
\eqref{conseqofradiation}.

For any \emph{incident direction} $d\in \mathbb{S}^{N-1}$, let the incident wave be given by a planar plane wave
$$u^i_d(x)=\rme^{\rmi\omega x\cdot d},\quad x\in\R^N.$$

Since $|u^i_d(x)|=1$ for any $x\in\R^N$, choosing $0<\varepsilon<1$, \eqref{thm1appl} can not hold and we have uniqueness with a single measurement (that is, with $M=1$ corresponding to using a single incident field) in the Dirichlet case.
In the Neumann case, $|B_Nu^i_d|=\omega|\nu\cdot d|$. If we obtain $N$ measurements (that is, $M=N$), corresponding to $N$ different incident fields $u^i_{d_k}$,
$k=1,\ldots,N$,  with $d_k$  linearly independent, then 
no matter what $\nu$ is, $|B_Nu^i_{d_k}|=\omega|\nu\cdot d_k|>0$ for some $k$ and, choosing $0<\varepsilon<\omega|\nu\cdot d_k|$, \eqref{thm1appl} can not hold. Thus we have uniqueness with $N$ measurements in the Neumann case, or in the mixed Dirichlet and Neumann case.
For polyhedral obstacles, no matter what $d$ is, 
$|B_Nu^i_{d}|=\omega|\nu_i\cdot d|>0$ for some $i$, where $\nu_i$ are the linearly independent normals of reflection hyperplanes. 
Choosing $0<\varepsilon<\omega|\nu_i\cdot d|$, \eqref{thm2appl} can not hold. Thus we have uniqueness with $1$ measurement in the Neumann case,
or in the mixed Dirichlet and Neumann case, for polyhedral obstacles.

\subsubsection{Electromagnetic waves}
Let $N=3$.

We let $u=(E,H)=(E_1,E_2,E_3,H_1,H_2,H_3)\in \C^6$.
Let $\omega\in \R$, $\omega> 0$, and
$$Au=(\nabla\times E-i\omega H,\nabla\times H+i\omega E).$$
If $Au=0$, then $\Delta E+\omega^2E=0$ and $\Delta H+\omega^2H=0$, hence any component of $u$ is real analytic.
We consider two boundary conditions: $Bu=B_E=E\times \nu$, for a \emph{perfectly electric conducting} scatterer, and 
$Bu=B_Hu=H\times \nu$, for a \emph{perfectly magnetic conducting} scatterer.
Then, independently of $\nu$, $\tilde{T}_{\Pi}(u)=\pm(-T_{\Pi}(E),T_{\Pi}(H))$,
with $+$ for $B_E$ and $-$ for $B_H$. We note that $B_E$ and $B_H$ satisfy (A.3)(c$_1$) as well.

Note that by setting $L=L_1=M$ and $A_ku=Au^k$ and $B_ku=Bu^k$ we can actually consider $M$ different measurements, that is, we can consider $M$ different experiments by using $M$ different incident fields $u^i_k$.

The radiation condition is the \emph{Silver-M\"uller radiation condition} which is equivalent to the validity of the
Sommerfeld radiation condition for any component of $E$ and $H$. Thus the Silver-M\"uller radiation condition implies the validity of
\eqref{conseqofradiation}. About near-field data, we can just measure $E$ or $H$ on $\tilde{D}$. About far-field data, we can just measure the far-field pattern of all components of $E$ or of all components of $H$.

About incident fields, let $u^i=(E^i,H^i)$ be given by
\begin{equation}\label{eq:planewave}
E^i(x)=\frac{\rmi}{\omega}\nabla\times\left(\nabla\times p\rme^{\rmi \omega x\cdot d}\right),\quad H^i(x)=\nabla\times p\rme^{\rmi \omega x\cdot d},\quad x\in\mathbb{R}^3,
\end{equation}
that is,
$$E^i(x)=-\omega \rmi(q\times d)\rme^{\rmi \omega x\cdot d} ,\quad H^i(x)=-\omega \rmi(p\times d) \rme^{\rmi \omega x\cdot d},\quad x\in\mathbb{R}^3.
$$
Here $u^i=u^i_{d,p}$
is the \emph{normalised electromagnetic plane wave} with \emph{incident direction} $d\in\mathbb{S}^2$ and \emph{polarisation vector} $p\in\mathbb{R}^3$, $p\neq 0$ orthogonal to $d$,
and $q=p\times d$. Then
$|Bu^i_{d,p}|=\omega|\nu\times (q\times d)|$. Arguing as for the Neumann acoustic case, we have uniqueness with two measurements (that is, $M=2$)
corresponding to $2$ different incident fields
$u^i_{d_1,p_1}$ and $u^i_{d_2,p_2}$ with $q_1\times d_1$ and $q_2\times d_2$ linearly independent for $B_E$ and
$p_1\times d_1$ and $p_2\times d_2$ linearly independent for $B_H$.
If $q_1\times d_1$ and $q_2\times d_2$ are linearly independent and $p_1\times d_1$ and $p_2\times d_2$ are also linearly independent (for 
example if $d_1=d_2$ and $p_1$ and $p_2$ are linearly independent)
then we have uniqueness with these two measurements also in the 
mixed electric and magnetic conducting cases.
For polyhedral obstacles, instead, we have uniqueness with just $1$ measurement in the electric conducting, magnetic conducting and
mixed electric and magnetic conducting cases.

\subsubsection{Elastic waves} Let $N\geq 2$. For a $\C^N$-valued function $u$, let 
\begin{equation}\label{Navier}
Au=\mu\Delta u+(\lambda+\mu)\nabla(\mathrm{div} (u))+\rho \omega^2 u.
\end{equation}
The equation $Au=0$ is the \emph{Navier equation}; here $\lambda$ and $\mu$ are the Lam\'e constants such that $\mu>0$ and $\lambda+2\mu>0$, $\rho>0$ is the density and 
$\omega>0$ is the frequency.

The symmetric gradient of $u$, $Eu=\frac{1}{2}(\nabla u+(\nabla u)^T)$, is the \emph{strain tensor} while the \emph{stress} $\sigma(u)$ is
$$\sigma(u)=2\mu Eu+\lambda \mathrm{tr}(Eu)I_N=
2\mu Eu+\lambda\mathrm{div}(u)I_N,$$
where $\mathrm{tr}$ denotes the trace and $I_N$ is the identity matrix.
The Navier equation $Au=0$ can be rewritten as
$$\mathrm{div}(\sigma(u))+\rho \omega^2 u=0,$$
where the $\mathrm{div}$ applies row by row.

By Helmholtz decomposition, any $u$ such that $Au=0$ satisfies
$$u=u_s+u_p$$
where $u_p$ is the \emph{longitudinal wave} $u_p$ and $u_s$ is the \emph{transversal wave}. We have that both $u_p$ and $u_s$ solve $Au=0$. Moreover,
\begin{equation}\label{longwave}
u_p=-\frac{\nabla\mathrm{div}(u)}{\omega_p^2}, \quad\Delta u_p+\omega_p^2 u_p=0,\quad  \omega_p^2=\frac{\rho\omega^2}{\lambda+2\mu}.
\end{equation}
Finally,
\begin{equation}\label{transwave}
u_s=\frac{\nabla\mathrm{div}(u)-\Delta u}{\omega_s^2},\quad
\Delta u_s+\omega_s^2u_s=0,\quad  \omega_s^2=\frac{\rho\omega^2}{\mu}.
\end{equation}
We immediately conclude that $u$ is real-analytic.

About boundary conditions, first of all we define the \emph{surface traction} $Tr(u)=\sigma(u)\nu$. For any $\nu$ and any
vector $V$, we denote $V_{\tau}=V-(V\cdot\nu)\nu$ the tangential component of $V$ with respect to $\nu$.
We consider the so-called \emph{third} and \emph{fourth boundary conditions}. Namely,
$$Bu=B^3u=(u\cdot \nu ,(Tr(u))_{\tau})$$
and
$$Bu=B^4u=(u_{\tau},Tr(u)\cdot \nu)$$
Then, in both cases independently of $\nu$, $\tilde{T}_{\Pi}(u)=\pm T_{\Pi}(u)$, with $+$ for the third boundary condition and $-$ for the fourth one.
We note that $B^3$ and $B^4$ satisfy (A.3)(c$_1$) as well.

Note that by setting $L=L_1=M$ and $A_ku=Au^k$ and $B_ku=Bu^k$ we can actually consider $M$ different measurements, that is, we can consider $M$ different experiments by using $M$ different incident fields $u^i_k$.

The radiation condition is the \emph{Kupradze radiation condition} which corresponds to both $u_s$ and $u_p$ solving the Sommerfeld radiation condition.
Thus the Kupradze radiation condition implies the validity of
\eqref{conseqofradiation}. About near-field data, we measure $u$ on $\tilde{D}$. About far-field data, we measure the far-field pattern of $u$, which is equivalent
to measuring the far-field pattern of both the longitudinal and transversal wave of $u$.

About incident fields, we let $u^i_{p,d}$ be a \emph{longitudinal plane wave} 
\begin{equation}\label{planarlong-INTRO}
u^i_{p,d}(x)=d\;\rme^{\rmi \omega_pd\cdot x},\qquad x\in \mathbb{R}^N,
\end{equation}
where $d\in \mathbb{S}^{N-1}$ is the \emph{incident direction}, and let $u^i_{s,d,q}$ be \emph{transversal plane wave}
\begin{equation}\label{planartrans-INTRO}
u^i_{s,d,q}(x)=q\; \rme^{\rmi \omega_sd\cdot x},\qquad x\in \mathbb{R}^N,
\end{equation}
where $q\in \mathbb{C}^N\backslash \{0\}$ is a unitary vector orthogonal to $d$.
Then as incident wave $u^i$ we can choose a linear combination of longitudinal and transversal plane waves, namely
\begin{equation}\label{planarcomb-INTRO}
u^i=c_pu^i_{p,d}+c_su^i_{s,d,q}
\end{equation}
for some $c_p,c_s\in\mathbb{C}$ such that $|c_p|^2+|c_s|^2=1$. In this case
$\|u^i(x)\|=1$ for any $x\in \mathbb{R}^N$.
If $c_p=0$ we call $u^i$ a pure transversal incident wave and if $c_s=0$ we call $u^i$ a pure longitudinal incident wave.

Then
$$u^i\cdot \nu=c_p\rme^{\rmi \omega_pd\cdot x}d\cdot\nu +
c_s\rme^{\rmi \omega_sd\cdot x}q\cdot\nu$$
and
$$u^i_{\tau}= c_p\rme^{\rmi \omega_pd\cdot x}d_{\tau} +
c_s\rme^{\rmi \omega_sd\cdot x}q_{\tau}.
$$
If $d$ and $q$ are row vectors,
$$\sigma(u^i)=c_p\rmi \omega_p\rme^{\rmi \omega_pd\cdot x} \left(2\mu d^Td+\lambda I_N\right)
+c_s\rmi \omega_s\rme^{\rmi \omega_sd\cdot x}\mu\left(d^Tq+q^Td\right)
$$
hence
\begin{multline*}
Tr(u^i)=\sigma(u^i)\nu=\\
c_p\rmi \omega_p\rme^{\rmi \omega_pd\cdot x} \left(2\mu (d\cdot \nu)d^T+\lambda \nu\right)
+c_s\rmi \omega_s\rme^{\rmi \omega_sd\cdot x}\mu\left((q\cdot\nu) d^T+(d\cdot\nu)q^T\right).
\end{multline*}
Therefore
$$
Tr(u^i)\cdot\nu=c_p\rmi \omega_p\rme^{\rmi \omega_pd\cdot x} \left(2\mu (d\cdot \nu)^2+\lambda \right)
+c_s\rmi \omega_s\rme^{\rmi \omega_sd\cdot x}2\mu(q\cdot\nu)(d\cdot\nu)
$$
and
$$(Tr(u^i))_{\tau}=c_p\rmi \omega_p\rme^{\rmi \omega_pd\cdot x} 2\mu (d\cdot \nu)d_{\tau}
+c_s\rmi \omega_s\rme^{\rmi \omega_sd\cdot x}\mu\left((q\cdot\nu) d_{\tau}+(d\cdot\nu)q_{\tau}\right).
$$
In conclusion,
\begin{multline*}
B^3(u^i)=
\Big(c_p\rme^{\rmi \omega_pd\cdot x}d\cdot\nu +
c_s\rme^{\rmi \omega_sd\cdot x}q\cdot\nu,\\
c_p\rmi \omega_p\rme^{\rmi \omega_pd\cdot x} 2\mu (d\cdot \nu)d_{\tau}
+c_s\rmi \omega_s\rme^{\rmi \omega_sd\cdot x}\mu\left((q\cdot\nu) d_{\tau}+(d\cdot\nu)q_{\tau}\right)
\Big)
\end{multline*}
and
\begin{multline*}
B^4(u^i)=\Big(c_p\rme^{\rmi \omega_pd\cdot x}d_{\tau} +
c_s\rme^{\rmi \omega_sd\cdot x}q_{\tau},\\
c_p\rmi \omega_p\rme^{\rmi \omega_pd\cdot x} \left(2\mu (d\cdot \nu)^2+\lambda \right)
+c_s\rmi \omega_s\rme^{\rmi \omega_sd\cdot x}2\mu(q\cdot\nu)(d\cdot\nu)
\Big).
\end{multline*}

We conclude that, for $B^4$ it is enough to use $1$ measurement corresponding to any pure longitudinal incident wave with any $d$ (that is, by choosing $c_p=1$ and $c_s=0$, for example). In fact, if $d_{\tau}\neq 0$, then the norm of the first component of $B^4(u^i)$ is $\|d_{\tau}\|> 0$ and we obtain a contradiction. Otherwise,
$|d\cdot\nu|=1$ and the modulus of the second component of $B^4(u^i)$ is $\omega_p(2\mu+\lambda)>0$ and we obtain a contradiction.

Instead, for $B^3$ we need to use two measurements to avoid the unfortunate case that both $d$ and $q$ are orthogonal to $\nu$. For example, one can use two pure transversal incident waves $u^i_{s,d_1,q_1}$ and $u^i_{s,d_2,q_2}$ with $d_1=d_2$ and $q_1$ and $q_2$ linearly independent. We note that these measurements
give uniqueness even in the mixed
third and fourth boundary conditions.

Finally, in the case of polyhedral obstacles,
one measurement is enough, by choosing as incident wave any pure transversal incident wave or any pure longitudinal incident wave, even in the mixed
third and fourth boundary conditions.

\end{document}